\newtheorem{theorem}{Theorem}[section]
\newtheorem{lemma}[theorem]{Lemma}
\newtheorem{corollary}[theorem]{Corollary}
\theoremstyle{definition}
\newtheorem{remark}[theorem]{Remark}
\newtheorem{definition}[theorem]{Definition}
\newcommand{\Q}{\mathbb{Q}}
\newcommand{\Z}{\mathbb{Z}}
\newcommand{\pri}{\mathfrak{p}}
\newcommand{\inject}{\hookrightarrow}
\newcommand{\onto}{\twoheadrightarrow}
\newcommand{\et}{\mathrm{\acute{e}t}}
\newcommand{\dR}{\mathrm{dR}}
\newcommand{\un}{\mathrm{un}}
\newcommand{\crys}{\mathrm{cr}} 
\newcommand{\loc}{\mathrm{loc}}
\newcommand{\glob}{\mathrm{glob}}
\newcommand{\tensor}{\otimes}
\newcommand{\corr}{\Rightarrow}
\newcommand{\proj}{\mathbf{P}}
\newcommand{\VV}{\mathcal{V}}
\DeclareMathOperator{\GL}{GL}
\DeclareMathOperator{\Res}{Res}
\DeclareMathOperator{\Gal}{Gal}
\DeclareMathOperator{\Jac}{Jac}
\DeclareMathOperator{\Un}{Un}
\DeclareMathOperator{\Aut}{Aut}
\DeclareMathOperator{\Iso}{Iso}
\DeclareMathOperator{\Lie}{Lie}
\DeclareMathOperator{\Spec}{Spec}
\DeclareMathOperator{\Vect}{Vect}
\DeclareMathOperator{\Sym}{Sym}
\DeclareMathOperator{\rank}{rank}
\DeclareMathOperator{\Gr}{Gr}
\begin{document}

\title{Rational Points on Solvable Curves over $\Q$ via Non-Abelian Chabauty}
\author{Jordan~S.~Ellenberg and Daniel~Rayor~Hast}
\date{March 16, 2021}

\maketitle

\begin{abstract}
We study the Selmer varieties of smooth projective curves of genus at least two defined over $\Q$ which geometrically dominate a curve with CM Jacobian. We extend a result of Coates and Kim to show that Kim's non-abelian Chabauty method applies to such a curve. By combining this with results of Bogomolov--Tschinkel and Poonen on unramified correspondences, we deduce that any cover of $\proj^1$ with solvable Galois group, and in particular any superelliptic curve over $\Q$, has only finitely many rational points over $\Q$.
\end{abstract}

\section{Introduction}
When $Y$ is a smooth projective curve of genus $g \geq 2$ over a number field $F$, Faltings' theorem~\cite{Faltings83,Faltings84} (formerly Mordell's conjecture) shows that the set $Y(F)$ of rational points is finite.

Even before Faltings, one knew finiteness of $Y(F)$ under certain conditions. One early strategy, developed by Chabauty \cite{Chabauty}, shows that $Y(F)$ is finite whenever the Mordell--Weil rank of the Jacobian $J_Y = \Jac(Y)$ is strictly smaller than $g$. More recently, Kim \cite{Kim-siegel, Kim-alb, Kim-galois, CK} developed a non-abelian version of the Chabauty method, in which the role of the Mordell--Weil group of the Jacobian is played by a $p$-adic manifold called the \emph{Selmer variety}, which we describe in \S\ref{section:kimsketch} below. If this Selmer variety has small enough dimension, one can conclude that $Y(F)$ is finite. This ``dimension hypothesis" is the nonabelian analogue of the Chabauty condition
\[
\rank J_Y(F) < g.
\]
While there are many curves that fail to satisfy the Chabauty condition, it is at least plausible to hope that every curve over every number field satisfies Kim's dimension hypothesis.

However, verifying the dimension hypothesis has been difficult, apart from certain special classes of curves. One such class, studied by Coates and Kim in \cite{CK}, is curves whose Jacobians are CM abelian varieties.

\begin{theorem}[\citet{CK}]
\label{th:ck0}
  Let $Y$ be a smooth projective curve over $\Q$ of genus at least $2$ whose Jacobian is a CM abelian variety. Then $Y$ satisfies the dimension hypothesis, and in particular, $Y(\Q)$ is finite.
\end{theorem}

The goal of the present paper is to generalize Theorem~\ref{th:ck0} to make it apply in slightly greater generality, and to show how this change can be used to substantially expand the class of curves over $\Q$ whose rational points can be proven finite via Kim's method.  In particular, we get the following corollary from our Theorem~\ref{th:main0}.

\begin{corollary}
\label{co:main0}
  Let $Y/\Q$ be a smooth superelliptic curve $y^d = f(x)$ of genus at least $2$. Then $Y(\Q)$ is finite.
\end{corollary}

Of course, this result is a special case of Faltings' theorem.  However, the non-abelian Chabauty method is a fundamentally different way of proving finiteness, whose full scope one would like to understand. In particular, Chabauty methods are more amenable than others to providing explicit upper bounds on the number of rational points~(cf.\ \cite{Col, KRZB, BD18}), as well as effective algorithms for computing the set of rational points~(cf.\ \cite{MP12, FW99, BD17, BDMTV17}). In the present work, we do not attempt to address such questions of effectiveness.

We now state our generalization of Theorem~\ref{th:ck0}.

\begin{theorem}
  \label{th:main0}
  Let $Y$ and $X$ be smooth projective curves over $\Q$ of genus at least $2$. Suppose there is a dominant map $f_K\colon Y_K \to X_K$ for some finite Galois extension $K/\Q$, and suppose the Jacobian $J_X$ of $X$ is a CM abelian variety. Then $Y(\Q)$ is finite.
\end{theorem}

\begin{remark}
  We do not quite prove the dimension hypothesis for $Y$ itself, but rather for what one might call a ``relative Selmer variety" attached to $f_K$. Our method is in some sense a non-abelian analogue of a method used in a paper of Flynn and Wetherell~\cite{FW99}.
  In that paper, the authors study certain genus $5$ curves $Y/\Q$ which are \'etale covers of a genus $2$ curve $C$, and which admit a dominant map $f$ to an elliptic curve $E$; however, the elliptic curve, whence also the map, is defined over a cubic extension $K$. The proof then proceeds by considering the map from $Y$ to the Weil restriction of scalars $\Res^K_\Q E$, which is an abelian $3$-fold defined over $\Q$; since the rank of $E(K)$ is the same as that of $(\Res^K_\Q E)(\Q)$, it suffices to show that $E(K)$ has rank less than $3$, which yields a proof of finiteness for rational points on $Y$, and finally on $C$.

  Our argument has a similar structure, utilizing the map afforded by $f$ from $Y$ to the variety $\Res^K_\Q X$. In our main application, $Y$ will be a finite \'etale cover of a curve $C$, $X$ will be a fixed genus $2$ curve, and we we use the method to prove finiteness of rational points on $Y$ and finally on $C$. We will state the ``real version'' of our theorem, a bound on the dimension of local Selmer varieties, as Theorem~\ref{thm:dim-hyp} in \S\ref{section:kimsketch} below, once we've set up the necessary definitions.
\end{remark}

\begin{remark}
  \label{rem:number-fields}
  The base field is restricted to $\Q$ due to technical difficulties in extending the non-abelian Chabauty method to curves over a number fields $F/\Q$. The method works by constructing $p$-adic Coleman functions that vanish on the set of rational points. However, the unipotent Albanese map is only $\Q_p$-analytic, so the method treats the curve as an $[F : \Q]$-dimensional $\Q_p$-manifold; thus, the vanishing of a single Coleman function does not suffice to prove Diophantine finiteness when $[F : \Q] > 1$. See \cite{Dogra-unlikely-intersections} and \cite{Hast-transcendence} for two ways to overcome this obstacle and extend the results of this paper to curves over number fields.
\end{remark}

The interest of Theorem~\ref{th:main0} would be limited without a supply of curves satisfying its conditions.  Fortunately, such a fund of examples is supplied by a theorem of Bogomolov and Tschinkel~\cite{BT02} (see also \cite{BQ17}) which shows that every hyperelliptic curve has an \'{e}tale cover which geometrically dominates a curve over $\Q$ with CM Jacobian. Poonen~\cite{P} generalized this theorem to a more general class of curves, including all superelliptic curves. By now it is well-understood that one can control the rational points of a variety $Y$ by controlling the rational points of the twists of an \'{e}tale cover of $Y$.  This circle of ideas will allow us to derive Corollary~\ref{co:main0} from Theorem~\ref{th:main0}.

In \S\ref{section:kimsketch}, we briefly sketch Kim's nonabelian Chabauty method. In \S\ref{section:fundamental-groups}, we define certain quotients of the \'etale and de Rham fundamental groups. In \S\ref{section:surjectivity}, we prove surjectivity of certain maps between fundamental groups. In \S\ref{section:selmer-varieties}, we define Selmer varieties and unipotent Albanese maps associated to the algebraic groups of \S\ref{section:fundamental-groups}, and we present the key diagram involving Selmer varieties. In \S\ref{section:bounds} and \S\ref{section:conjugation}, we prove the bounds needed for the dimension hypothesis, which we prove in \S\ref{section:dimhyp}. Finally, in \S\ref{section:superelliptic}, we combine our results with a theorem of Poonen \cite{P} to deduce finiteness of $Y(\Q)$ for several classes of curves, including hyperelliptic and superelliptic curves.

\section{A sketch of Kim's method}
\label{section:kimsketch}

Let $Y$ be a smooth projective curve of genus $g \geq 2$ over a number field $F$ such that $Y(F)$ is nonempty. The fundamental idea of the Chabauty \cite{Chabauty} method is to embed $Y$ in its Jacobian variety $J_Y$ by the Abel--Jacobi map associated to a rational point $b \in Y(F)$, then study how the $F_\pri$-points of $Y$ (where $\pri$ is a prime of $F$) interact with the $F$-points of $J_Y$. This is illustrated by the commutative diagram
\begin{equation}
  \label{eqn:chabauty}
  \begin{tikzcd}
    Y(F) \ar[r, hookrightarrow] \ar[d, hookrightarrow] & Y(F_\pri) \ar[d, hookrightarrow] \ar[dr] \\
    J_Y(F) \ar[r, hookrightarrow] & J_Y(F_\pri) \ar[r,"\log"] & \Lie(J_Y(F_\pri)).
  \end{tikzcd}
\end{equation}
By the Mordell--Weil theorem, $J_Y(F)$ is a finitely-generated abelian group of some rank $r$. In the case where $r < g$ and $\pri$ is an unramified prime of $F$ of good reduction for $Y$, Chabauty proved that the image of $Y(F_\pri)$ in the $\Q_p$-vector space $J_Y(F_\pri) \tensor_\Z \Q_p$ is dense, while the image of $J_Y(F)$ in $J_Y(F_\pri) \tensor_\Z \Q_p$ is contained in the vanishing of some nonzero form $f$; thus $Y(F)$ lies in the vanishing locus of $f$ in $Y(F_\pri)$. The right vertical arrow is locally $p$-adic analytic, given explicitly by $p$-adic integration. By the Zariski-density, $f\rvert_{Y(F_\pri)}$ is nonzero, so it has finitely many zeroes on each residue disk, whence $Y(F)$ is finite. (Actually, in this case, $f$ is a linear form, so it's enough to show that the image of $Y(F_\pri)$ in $\Lie(J_Y(F_\pri))$ isn't contained in an affine hyperplane.)

Although Faltings' result subsumes Chabauty's, this method is still of interest: Coleman \cite{Col} refined Chabauty's method to show that, when $r < g$ as above, if $\pri$ is an unramified prime of good reduction for $Y$, of residue characteristic at least $2g$, then
\[
\#Y(F) \leq N\pri + 2g(\sqrt{N\pri} + 1) - 1.
\]
Stoll \cite{Stoll06} (for hyperelliptic curves) and Katz and Zureick-Brown \cite{KZB13} (in the general case) extended Coleman's method to primes of bad reduction. In the case where $r \leq g - 3$, Stoll \cite{Stoll} (again in the hyperelliptic case) and Katz, Rabinoff, and Zureick-Brown \cite{KRZB} (in the general case) used this to obtain a uniform bound (depending only on $g$ and $[F : \Q]$) on the cardinality of $Y(F)$ for such curves. This bound is very explicit; for example, when $F = \Q$, the bound is $\#Y(\Q) \leq 84g^2 - 98g + 28$.

Unfortunately, Chabauty's method does not apply when $r \geq g$. Minhyong Kim's idea (see \cite{Kim-siegel}, \cite{Kim-alb}, \cite{Kim-galois}, \cite{CK}), motivated in large part by Grothendieck's anabelian philosophy and the section conjecture, was to develop a ``non-abelian Chabauty" method, in which the Jacobian of $Y$ is replaced by a geometric object capturing a larger piece of the fundamental group, allowing a version of Chabauty's argument to go through even when the rank of $J_Y(F)$ is large.

The description of Kim's method that follows is largely drawn from \cite{Kim-alb}.

Remaining in the abelian context for a moment, let $S$ be a finite set of primes of $F$ containing all primes of bad reduction for $Y$, let $\pri \notin S$ be a finite prime of residue characteristic $p$, and let $T$ be the union of $S$ with the set of all primes of $F$ dividing $p$ and all archimedean primes. Let $F_T/F$ be the maximal subfield of $\bar{F}$ unramified at all primes not in $T$, and let $G_T = \Gal(F_T/F)$ and $G_\pri = \Gal(\bar{F}_\pri/F_\pri)$. Then we have a cohomological version of the commutative diagram \eqref{eqn:chabauty}:
\[
\begin{tikzcd}
  Y(F) \ar[r, hookrightarrow] \ar[d] & Y(F_\pri) \ar[d] \ar[dr] \\
  H_f^1(G_T, V_p) \ar[r] & H_f^1(G_\pri, V_p) \ar[r] & \Lie(J_Y) \tensor_{\Q_p} F_\pri,
\end{tikzcd}
\]
where $V_p$ is the $\Q_p$-Tate module of $J_Y$, and $H_f^1$ denotes the pro-$p$-Selmer groups, i.e., $H_f^1(G_\pri, V_p)$ is the moduli space of crystalline $G_\pri$-torsors of $V_p$, while $H_f^1(G_T, V_p)$ is the moduli space of $G_T$-torsors of $V_p$ that are crystalline at $\pri$.

Kim's non-abelian Chabauty method replaces $V_p$, which is essentially equivalent to the abelianization of the geometric (\'etale) fundamental group of $Y$, with the $\Q_p$-pro-unipotent completion (i.e., Malcev completion) $\Pi_Y$ of the geometric fundamental group of $Y$. In order to work with schemes of finite type, we truncate after finitely many steps of the lower central series, and denote by $\Pi_{Y, n}$ the quotient of $\Pi_Y$ by the $(n + 1)$-st level of the lower central series of $\Pi_Y$.

Kim constructs moduli spaces of torsors, called \emph{Selmer varieties} of $Y$, as follows: By \cite{Kim-siegel}, the moduli space of all $G_\pri$-torsors of $\Pi_{Y, n}$ is representable by an affine $\Q_p$-variety $H^1(G_\pri, \Pi_{Y, n})$, and likewise for $H^1(G_T, \Pi_{Y, n})$. By \cite[Lemma 5]{Kim-alb}, the functor on $\Q_p$-algebras
\[
A \mapsto H^0(G_\pri, \Pi_{Y, n}(A \otimes_{\Q_p} B_{\crys})/\Pi_{Y, n}(A)),
\]
where $B_{\crys}$ is Fontaine's crystalline period ring \cite{Fontaine94}, is representable by an affine $\Q_p$-variety, denoted $H^0(G_\pri, \Pi_{Y, n}^{B_{\crys}}/\Pi_{Y, n})$. There is a connecting homomorphism
\[
H^0(G_\pri, \Pi_{Y, n}^{B_{\crys}}/\Pi_{Y, n}) \to H^1(G_\pri, \Pi_{Y, n}),
\]
the image of which is denoted $H_f^1(G_\pri, \Pi_{Y, n})$, which represents the moduli space of crystalline $G_\pri$-torsors of $\Pi_{Y, n}$, that is, torsors whose base change to $B_{\crys}$ is trivial. The global Selmer variety $H_f^1(G_T, \Pi_{Y, n})$ is then defined as the preimage of $H_f^1(G_\pri, \Pi_{Y, n})$ under the restriction map
\[
\loc_\pri\colon H^1(G_T, \Pi_{Y, n}) \to H^1(G_\pri, \Pi_{Y, n}).
\]

Likewise, $\Lie(J_Y) \tensor_{\Q_p} F_\pri$ is replaced with
\[
\Pi_Y^{\dR}/F^0 \Pi_Y^{\dR}.
\]
Here, $\Pi_Y^{\dR}$ is the de Rham fundamental group of $Y$, classifying unipotent vector bundles with integrable connection \cite{Deligne89}, and the de Rham realization $\Pi_Y^{\dR}/F^0 \Pi_Y^{\dR}$ is the moduli space of ``admissible'' torsors for $\Pi_Y^{\dR}$ with separately trivializable Hodge structure and Frobenius action \cite[Prop.\ 1]{Kim-alb}. Restricting to the Tannakian subcategory generated by bundles of unipotency class at most $n$, we obtain finite-level versions
\[
\Pi_{Y, n}^{\dR}/F^0 \Pi_{Y, n}^{\dR},
\]
which are represented by algebraic varieties over $F_\pri$.

From now on, assume $\pri$ is a prime of good reduction for $Y$ (but see \cite{Betts19} for recent progress on the bad reduction case). As in the abelian case, there are analogues of the Abel--Jacobi map---which Kim calls the (local and global) \emph{unipotent Albanese maps}---fitting into a commutative diagram
\[
\begin{tikzcd}
  Y(F) \ar[r, hookrightarrow] \ar[d] & Y(F_\pri) \ar[d] \ar[dr] \\
  H_f^1(G_T, \Pi_{Y, n}) \ar[r, "\loc_\pri"] & H_f^1(G_\pri, \Pi_{Y, n}) \ar[r, "D"] & \Res_{\Q_p}^{F_\pri}(\Pi_{Y, n}^{\dR}/F^0 \Pi_{Y, n}^{\dR}).
\end{tikzcd}
\]
The diagonal arrow in this diagram is $p$-adic analytic and can be written explicitly in terms of $p$-adic iterated integrals (generalizing the $p$-adic integrals appearing in the method of Chabauty and Coleman); the localization map $\log_\pri := D \circ \loc_\pri$ is algebraic.

Kim showed that the image of $Y(F_\pri)$ under the unipotent de Rham Albanese map is Zariski-dense in the de Rham local Selmer variety. Suppose the image of $\log_\pri$ is non-Zariski-dense; equivalently, there is an algebraic function $F$ on $\Res_{\Q_p}^{F_\pri}(\Pi_{Y, n}^{\dR}/F^0 \Pi_{Y, n}^{\dR})$ which vanishes on the image of $\log_\pri$. (Unlike in the abelian setting, $F$ might not be linear.) Since the image of $Y(F_\pri)$ is Zariski-dense, the pullback $f$ of $F$ to $Y(F_\pri)$ is nonzero. But the image of $Y(F)$ in $Y(F_\pri)$ lies in the vanishing locus of $f$, which is necessarily finite; so $Y(F)$ is finite as well.

Since $\log_\pri$ is algebraic, it suffices for the desired non-Zariski-density to prove the following ``dimension hypothesis'' for $n \gg 0$:
\begin{equation*}
  \dim H_f^1(G_T, \Pi_{Y, n}) < \dim \Res_{\Q_p}^{F_\pri}(\Pi_{Y, n}^{\dR}/F^0 \Pi_{Y, n}^{\dR})
\end{equation*}
More generally, it will suffice in our work to prove this for a certain quotient $\Psi$ of $\Pi_{Y, n}$:
\begin{equation*}
  \dim H_f^1(G_T, \Psi) < \dim \Res_{\Q_p}^{F_\pri}(\Psi^{\dR}/F^0 \Psi^{\dR})
\end{equation*}
Such a statement has been proved in several special cases; most relevant to us is the case \cite{CK} where $Y$ is projective of genus at least two, $F = \Q$, and the Jacobian of $Y$ is isogenous over $\bar{\Q}$ to a product of CM abelian varieties. (See \cite{Kim-galois} for an overview of some contexts where the method has been carried through successfully.)

In the present work, we weaken one hypothesis on $Y$: instead of requiring that $Y$ itself have CM Jacobian, we require only that $Y$ admit a dominant map onto such a curve after extension of the base number field. By combining this with theorems of Poonen \cite{P} and Bogomolov and Tschinkel \cite{BT02} on the existence of certain unramified correspondences, we deduce that any curve admitting a map to $\proj^1$ with solvable Galois group has only finitely many $\Q$-points. (See \S\ref{section:superelliptic} for the precise statement.)

\begin{theorem}
  \label{thm:dim-hyp}
  Let $Y$ and $X$ be smooth projective curves over $\Q$ of genus at least $2$ such that $Y(\Q)$ is nonempty.
  Suppose there is a dominant map $f_K\colon Y_K \to X_K$ for some finite Galois extension $K/\Q$ of degree $d$.
  Let $S$ be a finite set of primes such that $Y$ and $X$ both have good reduction outside $S$.
  Let $p$ be an odd prime not in $S$.
  Let $T = S \cup \{p, \infty\}$.
  Suppose also that there is a number field $L/K$ and a constant $B > 0$ (depending on $X$ and $T$) such that for all $n \geq 1$,
  \[
  \sum_{i=1}^{n} \dim H^2(G_{L, T}, \Gr_n(U_X)) \leq Bn^{2g - 1},
  \]
  where $G_{L, T}$ is the Galois group of the maximal algebraic extension of $L$ unramified away from $T$, we define $U_X$ to be the quotient of $\Pi_X$ by the third level of its derived series, and $\Gr_n(U_X)$ is the $n$-th graded piece of $U_X$ with respect to the lower central series filtration.
  Then for $n \gg 0$,
  \[
  \dim H_f^1(G_{\Q, T}, W_{[n]}) < \dim W_{[n]}^{\dR}/F^0 W_{[n]}^{\dR},
  \]
  where $W_{[n]}$ is the quotient of $\Pi_Y$ defined in \S\ref{section:fundamental-groups}.
  In particular, the image of $\log_p$ is non-Zariski-dense, so $Y(\Q)$ is finite.
\end{theorem}

\begin{remark}
  By \cite[Thm. 0.1]{CK}, the hypotheses of Theorem~\ref{thm:dim-hyp} are satisfied whenever the Jacobian $J_X$ of $X$ is isogenous over $\bar{\Q}$ to a product of CM abelian varieties.
\end{remark}

\section{Generalities on fundamental groups}
\label{section:fundamental-groups}
\subsection{\'Etale realizations}

This subsection and the one that follows are essentially a review of the objects constructed in \cite{Kim-alb}.

We briefly recall the definition of the $\Q_p$-pro-unipotent \'etale fundamental group. Let $X$ be a scheme over a field $F$ of characteristic zero, and let $\bar{X} := X \times_{\Spec F} \Spec \bar{F}$. Let $b \in X(F)$ be a point. Denote by $\Pi_X$ the $p$-adic unipotent completion \cite[Appendix A.2]{HM03} of the geometric \'etale fundamental group $\pi_1^{\et}(\bar{X}, b)$. Then $\Pi_X$ is a pro-unipotent group scheme over the field $\Q_p$, with an action of $G_F := \Gal(\bar{F}/F)$ induced by the action of $G_F$ on $\bar{X}$.

One can also interpret $\Pi_X$ as the fundamental group of the Tannakian category $\Un_p^{\et}(\bar{X})$ of unipotent $\Q_p$-smooth sheaves on the \'etale site of $\bar{X}$ with fiber functor $e_b$ given by the fiber at $b$. Indeed, $\Un_p^{\et}(\bar{X})$ is equivalent to the category of unipotent $\Q_p$-representations of $\pi_1^{\et}(\bar{X}, b)$, which is equivalent to the category of $\Q_p$-representations of $\Pi_X$ by the universal property of unipotent completion.

Given a geometric point $x \in X(\bar{F})$, we have the path torsor
\[
P_X^{\et}(x) = \Iso^{\tensor}(e_b, e_x).
\]
If $x \in X(F)$, then $P_X^{\et}(x)$ also has an action of $G_F$.

Given a pro-unipotent algebraic group $U$, we denote the lower central series $U^1 = U$ and $U^{n + 1} = [U, U^n]$, and the derived series by $U^{(1)} = U$ and $U^{(n + 1)} = [U^{(n)}, U^{(n)}]$. Also denote $U_n = U/U^{n + 1}$.

Let
\[
U_X := \Pi_X/\Pi_X^{(3)}
\]
be the metabelianization of $\Pi_X$. This is again a pro-unipotent $\Q_p$-group scheme with $G_F$-action.

\subsection{De Rham realizations}
We will also need the ``de Rham realizations'' of the above algebraic groups, whose definitions we now recall. Let $L$ be a field of characteristic zero (which will be a local field $F_\pri$ in our application), and let $X$ be a smooth $L$-scheme. As in \cite[\S1]{Kim-alb}, let $\Un(X)$ be the Tannakian category of unipotent vector bundles with integrable connection on $X$.

For each $n \geq 1$, let $\left<\Un_n(X)\right>$ be the full Tannakian subcategory of $\Un(X)$ generated by bundles with connection $(\VV, \nabla_\VV\colon \VV \to \Omega_{X/L} \tensor \VV)$ with index of unipotency at most $n$, i.e., such that there exists a filtration
\[
\VV = \VV_n \supseteq \VV_{n - 1} \supseteq \dots \supseteq \VV_1 \supseteq \VV_0 = 0
\]
stabilized by the connection $\nabla_\VV$ and such that each $(\VV_i/\VV_{i-1}, \nabla_\VV)$ is a trivial bundle with connection (i.e., given by pullback of a bundle with connection on $\Spec L$).

Given an $L$-scheme $S$ and a base point $b \in X(L)$, we have a morphism $b_S\colon S \to X_S$ and hence fiber functors
\[
e_b^n(S)\colon \Un_n(X \times_L S) \to \Vect_S
\]
sending each bundle $\VV$ in $\Un_n(X \times_L S)$ to the fiber $\VV_{b_S}$. Let $\Pi_{X, n}^{\dR}$ be the unipotent algebraic group over $L$ representing the functor given on $L$-schemes $S$ by
\[
S \mapsto \Aut^{\tensor}(e_b^n(S)),
\]
the group of tensor-compatible automorphisms of the functor $e_b^n(S)$. Likewise define $\Pi_X^{\dR}$ as the pro-unipotent algebraic group representing the functor $S \mapsto \Aut^{\tensor}(e_b(S))$, where $e_b(S)\colon \Un(X \times_L S) \to \Vect_S$ is the fiber functor at $b_S$.

Given another point $x \in X(L)$, we have path torsors
\[
P_{X, n}^{\dR}(x) = \Iso^{\tensor}(e_b^n, e_x^n).
\]

Now suppose $L = F_{\pri}$ is the completion of a number field $F$ at a prime $\pri$ of good reduction for $X$. As explained in \cite[p.~92]{Kim-alb}, the $F_{\pri}$-algebraic group $\Pi_{X, n}^{\dR}$ has a Hodge filtration $F^\bullet$ and, via comparison with the crystalline fundamental group (which is defined over the maximal unramified extension $L_0/\Q_p$ that is contained in $F_{\pri}$, and becomes isomorphic to the de Rham fundamental group after base change to $F_{\pri}$), an action of the Frobenius of the special fiber. We will always consider $\Pi_{X, n}^{\dR}$ with these two extra structures, which are compatible in the sense that the Frobenius operator preserves the Hodge filtration.

As in the previous section, let
\[
U_{X, n}^{\dR} := \Pi_{X, n}^{\dR}/(\Pi_{X, n}^{\dR})^{(3)}
\]
be the metabelianization of $\Pi_{X, n}^{\dR}$.

\subsection{Restriction of scalars}
Now we come to the main new construction in our paper.

Suppose we are given quasiprojective $F$-varieties $Y$ and $X$, a finite Galois extension $K/F$ of degree $d$, and a $K$-morphism $f_K\colon Y_K \to X_K$. Then the restriction of scalars $\Res_F^K X_K$ is represented by a quasiprojective variety \cite[Thm.\ 7.6.4]{BLR}. By the universal property of restriction of scalars, $f_K$ corresponds to an $F$-morphism $f\colon Y \to R := \Res_F^K X_K$. Unipotent completion is functorial, so $f$ induces a map
\[
U_f\colon U_Y \to U_R
\]
equivariant for the $G_F$-actions on $U_Y$ and $U_R$.

\begin{remark}
  \label{rem:weil-res}
  Unipotent completion commutes with finite products, so since $R_K \cong X_K^{\times d}$ (the $d$-fold direct product of $X_K$ with itself), we have a $G_K$-equivariant (but not necessarily $G_F$-equivariant) isomorphism $U_R \cong (U_X)^{\times d}$.
\end{remark}

Let $W \subseteq U_R$ be the image of $U_f$, i.e., the smallest subgroup scheme of $U_R$ through which the morphism $U_f$ factors. (This is well-defined by \cite[Thm.\ 5.39]{Milne-AG}.)  When $K$ is as small as possible, we expect that $W$ is the whole of $U_R$ in many cases, but we won't need this for our argument.

For each $n \geq 1$, let $W^{[n]} := W \cap U_R^n$ (where $U_R^n$ is the $n$-th level of the lower central series of $U_R$, and the intersection is as subgroup schemes of $U_R$), and
\[
W_{[n]} := W/W^{[n + 1]}.
\]
By construction, this induces surjections
\[
\pi_n\colon U_{Y, n} \to W_{[n]}
\]
of unipotent $\Q_p$-algebraic groups equivariant for the $G_F$-actions on $U_{Y, n}$ and $W_{[n]}$.

Similarly, for the de Rham realization, $f$ induces a map $U_f^{\dR}\colon U_Y^{\dR} \to U_R^{\dR}$ whose image we denote $W^{\dR}$. For each $n \geq 1$, let $(W^{\dR})^{[n]} := W^{\dR} \cap (U_R^{\dR})^n$ and $W_{[n]}^{\dR} := W^{\dR}/(W^{\dR})^{[n + 1]}$, and let $\pi_n^{\dR}\colon U_{Y, n}^{\dR} \to W_{[n]}^{\dR}$ be the induced surjection. These maps are compatible with the Hodge filtration and Frobenius action.

\section{Functorial properties of fundamental groups}
\label{section:surjectivity}
The following lemma about functorially induced morphisms of unipotent fundamental groups will be useful later on.

\begin{lemma}
  \label{lem:surjective}
  Let $L/\Q_p$ be a finite extension. Let $f\colon Y \to X$ be a morphism of smooth irreducible varieties over $L$ such that, for some dense open $X' \subseteq X$, the restriction $f \rvert_{f^{-1}(X')}: f^{-1}(X') \to X'$ is finite \'etale. Then the induced maps $\Pi_Y \to \Pi_X$ of $\Q_p$-unipotent \'etale fundamental groups and $\Pi_Y^{\dR} \to \Pi_X^{\dR}$ of de Rham fundamental groups are surjective.
\end{lemma}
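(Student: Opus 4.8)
The plan is to reduce surjectivity of $\Pi_Y \to \Pi_X$ (and, in parallel, of $\Pi_Y^{\dR} \to \Pi_X^{\dR}$) to injectivity of the pullback on first cohomology, and then to deduce that injectivity from a transfer argument on the finite \'etale locus. Note first that the hypothesis that $L/\Q_p$ be unramified plays no role here; only $\mathrm{char}\, L = 0$ matters.

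\emph{Step 1: reduction to abelianizations.} A homomorphism $\phi\colon G \to H$ of pro-unipotent group schemes over a field of characteristic zero is faithfully flat (surjective) if and only if $G^{\mathrm{ab}} \to H^{\mathrm{ab}}$ is surjective: writing $H' \subseteq H$ for the image of $\phi$, the hypothesis gives $\Lie(H') + [\Lie H, \Lie H] = \Lie H$, and a straightforward induction along the lower central series of $\Lie H$ (using that each $[\Lie H, \Lie H]$ lands in $\Lie H' + (\text{higher term})$) then forces $\Lie H' = \Lie H$ after passing to the limit. Now $\Hom_{\mathrm{gp}/\Q_p}(\Pi_Y, \mathbb{G}_a) = \Hom_{\mathrm{cont}}(\pi_1^{\et}(\bar{Y}, b), \Q_p) = H^1_{\et}(\bar{Y}, \Q_p)$ (a finite-dimensional space), so $\Lie(\Pi_Y^{\mathrm{ab}}) \cong H^1_{\et}(\bar{Y}, \Q_p)^{\vee}$ and the map induced by $f$ is the $\Q_p$-linear dual of the pullback $f^*\colon H^1_{\et}(\bar{X}, \Q_p) \to H^1_{\et}(\bar{Y}, \Q_p)$. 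Likewise $\Hom_{\mathrm{gp}/L}(\Pi_X^{\dR}, \mathbb{G}_a) = \Ext^1_{\Un(X)}(\mathbf{1}, \mathbf{1}) = H^1_{\dR}(X)$, so $\Lie((\Pi_X^{\dR})^{\mathrm{ab}}) \cong H^1_{\dR}(X)^{\vee}$, with the induced map dual to $f^*\colon H^1_{\dR}(X) \to H^1_{\dR}(Y)$. Hence it suffices to show $f^*$ is injective on $H^1$ in both the \'etale and the de Rham realization.

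\emph{Step 2: the transfer argument.} Set $Y' := f^{-1}(X')$ and $f' := f\rvert_{Y'}\colon Y' \to X'$, a finite \'etale map of some degree $m$, with $m$ invertible since $\mathrm{char}\, L = 0$. Being finite flat, $f'$ carries a trace/transfer $f'_*$ on cohomology satisfying $f'_* \circ f'^* = m \cdot \mathrm{id}$ by the projection formula, in both the \'etale and de Rham theories, so $f'^*\colon H^1(\bar{X}') \to H^1(\bar{Y}')$ is injective. Moreover the restriction maps $H^1(\bar{X}) \to H^1(\bar{X}')$ and $H^1(\bar{Y}) \to H^1(\bar{Y}')$ are injective: $\bar{Y}'$ is a \emph{dense} open of the smooth (hence normal) irreducible $\bar{Y}$ — dense because $f$, being finite \'etale over the dense open $X' \subseteq X$ with $X$ irreducible, is dominant — and for a dense open of a normal irreducible variety the map on \'etale fundamental groups is surjective, whence the dual map on $H^1_{\et}(-, \Q_p) = \Hom_{\mathrm{cont}}(\pi_1^{\et}(-), \Q_p)$ is injective; the de Rham analogue follows from the Gysin sequence (the complement has codimension $\geq 1$), or by base change to $\C$. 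Finally, from $f \circ j_{Y'} = j_{X'} \circ f'$ one gets the commuting square on $H^1$, and the composite $H^1(\bar{X}) \to H^1(\bar{X}') \xrightarrow{f'^*} H^1(\bar{Y}')$ is injective and factors through $f^*\colon H^1(\bar{X}) \to H^1(\bar{Y})$; hence $f^*$ is injective. This argument is identical in the \'etale and de Rham realizations, and with Step 1 it proves the lemma.

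The only mildly delicate points are the characteristic-zero input in Step 1 (which is also what lets us invert $m$) and the injectivity of restriction-to-a-dense-open on $H^1_{\dR}$; I expect neither to be a genuine obstacle, so the proof should be short.
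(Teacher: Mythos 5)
Your proof is correct, and it takes a genuinely different route from the paper's. The paper proves surjectivity of the full pro-unipotent completion by first showing the image of $\pi_1^{\et}(\bar{Y}) \to \pi_1^{\et}(\bar{X})$ has finite index (via the same dense-open surjectivity and finite-\'etale facts you use), and then invoking a separate Lemma~\ref{lem:coker}: a homomorphism of topological groups with finite-index image induces a surjection on $k$-pro-unipotent completions (a Zariski-density and connectedness argument). For the de Rham side, the paper appeals to Olsson's non-abelian $p$-adic Hodge theory to transport the \'etale surjectivity across the crystalline comparison isomorphism $\Pi_Y^{\dR} \otimes_L B_{\crys} \cong \Pi_Y \otimes_{\Q_p} B_{\crys}$ — this is precisely where the unramified hypothesis on $L/\Q_p$ enters. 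You instead reduce to abelianizations by the Nakayama-type argument for pro-nilpotent Lie algebras, identify $\Lie(\Pi^{\mathrm{ab}})$ with the dual of $H^1$ in each realization, and prove injectivity of $f^*$ on $H^1$ by a transfer argument on the finite \'etale locus combined with injectivity of restriction to a dense open. This runs in parallel for both realizations and indeed renders the unramified hypothesis superfluous, as you note; your de Rham case avoids non-abelian $p$-adic Hodge theory entirely, at the modest cost of needing the Grothendieck/Lefschetz comparison to transfer the dense-open injectivity from $\C$ (the ``Gysin'' alternative you mention would require some care when the complement of $X'$ is singular, so ``base change to $\C$'' is the cleaner of your two options). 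One shared implicit assumption worth making explicit in both proofs: $f^{-1}(X')$ must be nonempty (equivalently $f$ dominant), which the paper also tacitly assumes by choosing a base point $y_0 \in f^{-1}(X')$. Overall your approach is more elementary and slightly more general; the paper's Lemma~\ref{lem:coker} is the more directly reusable statement.
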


(In fact, we will only use Lemma \ref{lem:surjective} in the case where $Y$ and $X$ are geometrically isomorphic to products of curves, in which case standard facts about fundamental groups of curves would suffice for our needs. We include the more general statement here due to the current interest in non-abelian Chabauty applied to higher-dimensional varieties, and potential application of the methods of the present paper in the higher-dimensional context.)

\begin{proof}
We proceed by comparison between the \'etale fundamental groups over an algebraic closure of $L$, the $\Q_p$-pro-unipotent fundamental groups, and the de Rham fundamental groups.

Let $y_0 \in Y$ and $x_0 \in X'$ be points such that $f(y_0) = x_0$, and let $\bar{y}_0$ and $\bar{x}_0$ be geometric points over $y_0$ and $x_0$, respectively. Let $\bar{Y}, \bar{X}, \bar{X}'$ be the base changes to $\bar{L}$. By functoriality, we have a commutative diagram of \'etale fundamental groups
\[
\begin{tikzcd}
  \pi_1^{\et}(f^{-1}(\bar{X}'), \bar{y}_0) \ar[r, "f_*", hook] \ar[d, twoheadrightarrow] & \pi_1^{\et}(\bar{X}', \bar{x}_0) \ar[d, twoheadrightarrow] \\
  \pi_1^{\et}(\bar{Y}, \bar{y}_0) \ar[r, "f_*"] & \pi_1^{\et}(\bar{X}, \bar{x}_0)
\end{tikzcd}
\]
in which the vertical maps are induced by restriction of \'etale covers to dense open subschemes. An \'etale cover of a smooth scheme is smooth, hence connected \'etale covers of smooth schemes are irreducible. Thus, the restriction of a connected cover to an dense open subscheme is connected, which implies surjectivity of the vertical maps by \cite[Expos\'e V, Prop.\ 6.9]{SGA1}. By Grothendieck's Galois theory, the image of the top horizontal map has finite index, hence the bottom horizontal map also has finite index.

Taking $\Q_p$-pro-unipotent completions \cite[Appendix A]{HM03} gives the morphism of $\Q_p$-unipotent \'etale fundamental groups $\pi_1^{\Q_p}(f)\colon \Pi_Y \to \Pi_X$, where $\Pi_Y := \pi_1^{\Q_p}(\bar{Y}, \bar{y}_0)$ and $\Pi_X := \pi_1^{\Q_p}(\bar{X}, \bar{x}_0)$. Now we need another lemma.

\begin{lemma}
  \label{lem:coker}
  Let $\varphi\colon H \to \Gamma$ be a morphism of topological groups whose image has finite index in $\Gamma$. Let $k$ be a topological field of characteristic zero. Then continuous $k$-unipotent completion induces a surjective morphism of $k$-pro-unipotent groups $\varphi^{\un}\colon H^{\un} \onto \Gamma^{\un}$.
\end{lemma}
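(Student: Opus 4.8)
The plan is to reduce the statement to a concrete fact about unipotent groups and their Lie algebras, using the universal property of unipotent completion. First I would recall that for a topological group $H$ over a topological field $k$ of characteristic zero, the $k$-unipotent completion $H^{\un}$ is the inverse limit of the universal $k$-unipotent quotients $H \to U$ factoring the given representations; equivalently, it is $\Spec$ of the Hopf algebra of ``matrix coefficients'' of finite-dimensional unipotent $k$-representations of $H$, and it is characterized by the universal property that any continuous homomorphism from $H$ to a unipotent $k$-group $U$ factors uniquely through $H \to H^{\un}$. To prove surjectivity of $\varphi^{\un}\colon H^{\un} \onto \Gamma^{\un}$, it suffices (by \cite[Thm.\ 5.39]{Milne-AG} or the fact that the image of a homomorphism of affine group schemes is a closed subgroup) to show that the image is not contained in any proper closed subgroup scheme $N \subsetneq \Gamma^{\un}$; since every such $N$ is contained in a maximal proper closed \emph{normal} subgroup (the preimage of a minimal nonzero quotient), it suffices to show that for every nontrivial finite-dimensional unipotent $k$-representation $\rho\colon \Gamma^{\un} \to \GL(V)$ that does not factor through a proper quotient, the composite $\rho \circ \varphi^{\un}$ is still nontrivial, i.e.\ the induced map on abelianizations $H^{\un,\mathrm{ab}} \to \Gamma^{\un,\mathrm{ab}}$ is surjective.

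The key step is therefore the abelian case: I would show that if $\varphi(H)$ has finite index in $\Gamma$, then the induced map on the ``abelianized unipotent completions'' is surjective. Concretely, $\Gamma^{\un,\mathrm{ab}}$ as a $k$-vector space is the pro-unipotent completion of $\Gamma^{\mathrm{ab}}$, which (in the relevant profinite or finitely generated settings) is $(\Gamma^{\mathrm{ab}} \tensor_{\Z} k)^{\wedge}$ or its continuous analogue; the map $\Gamma' := \varphi(H) \to \Gamma$ of finite index $m$ induces on abelianizations a map whose cokernel is killed by $m$ (the transfer/Verlagerung composed with inclusion is multiplication by $m$ on $\Gamma^{\mathrm{ab}}$), hence the cokernel vanishes after tensoring with $k$ since $\mathrm{char}\, k = 0$ and $m$ is invertible in $k$. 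So $H^{\un,\mathrm{ab}} \onto \Gamma'^{\un,\mathrm{ab}} \onto \Gamma^{\un,\mathrm{ab}}$ is surjective, where the first surjection is because $H \onto \varphi(H)$ is surjective and unipotent completion is right-exact on such surjections.

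Finally I would assemble these pieces: writing $\Gamma^{\un}$ as an inverse limit along its lower central series quotients $\Gamma^{\un}_n$, one proves surjectivity of $\varphi^{\un}$ onto each $\Gamma^{\un}_n$ by induction on $n$, the base case $n=1$ being the abelian statement above, and the inductive step following from the five lemma applied to the central extension $1 \to \Gamma^{\un,n}/\Gamma^{\un,n+1} \to \Gamma^{\un}_n \to \Gamma^{\un}_{n-1} \to 1$ together with the surjectivity of the induced map on the graded pieces $H^{\un,n}/H^{\un,n+1} \to \Gamma^{\un,n}/\Gamma^{\un,n+1}$, which in turn follows because these graded pieces are generated as vector spaces by iterated brackets of elements of the abelianizations and the abelianization map is already surjective. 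The main obstacle I expect is bookkeeping around the topology: one must be careful that all the homomorphisms involved (transfer maps, completions) are continuous in the chosen topology on $H$ and $\Gamma$, and that ``finite index'' interacts correctly with the pro-structure --- but once the purely group-theoretic transfer argument is in place, the passage to unipotent completions is formal.
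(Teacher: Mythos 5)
Your proposal is correct in outline but takes a genuinely different route from the paper. You reduce to the abelianization (using that a closed subgroup of a pro-unipotent group over a characteristic-zero field is the whole group iff it surjects onto the abelianization), handle the abelian case by an index/transfer argument (the cokernel of $\varphi(H) \to \Gamma^{\mathrm{ab}}$ is a finite group, hence dies after $\tensor\, k$), and then bootstrap through the lower central series. The paper's proof is much shorter and sidesteps the topological bookkeeping you flag at the end: the image of $\Gamma$ is Zariski-dense in $\Gamma^{\un}$, and $\Gamma = \bigcup_{i} \gamma_i\, \varphi(H)$ is a finite union of cosets, so taking Zariski closures writes the \emph{connected} group $\Gamma^{\un}$ (connected because $\mathrm{char}\,k = 0$) as a finite union of translates of $\overline{\varphi(H)}$, forcing $\overline{\varphi(H)} = \Gamma^{\un}$; and $\overline{\varphi(H)} = \varphi^{\un}(H^{\un})$ because images of group-scheme homomorphisms are closed. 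Your approach has the merit of isolating exactly where characteristic zero enters (invertibility of the index in $k$) and would transfer to settings lacking a Zariski-density statement, but it requires carefully identifying the continuous abelianization of $\Gamma^{\un}$ with $\Gamma^{\mathrm{ab}} \tensor k$, a step the density-plus-connectedness argument avoids entirely. Also note that your closing induction through the lower central series duplicates the earlier reduction to the abelianization: once one knows that a closed subgroup of a unipotent group is everything iff it surjects onto the abelianization, the induction is unnecessary.
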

\begin{proof}
  Let $\gamma_1, \dots, \gamma_n \in \Gamma$ be representatives of the left $H$-cosets of $\Gamma$. By construction, the image of $\Gamma$ in $\Gamma^{\un}(k)$ is Zariski-dense in $\Gamma^{\un}$. Since $\Gamma = \bigcup_{i=1}^{n} \gamma_i \cdot \varphi(H)$, taking Zariski closures, we obtain
  \[
  \Gamma^{\un} = \bigcup_{i=1}^{n} \gamma_i \cdot \overline{\varphi(H)}.
  \]
  Since $k$ has characteristic zero, $\Gamma^{\un}$ is connected, so in fact $\overline{\varphi(H)} = \Gamma^{\un}$. The image of a homomorphism of group schemes is closed, so
  \[
  \Gamma^{\un} = \overline{\varphi(H)} = \overline{\varphi^{\un}(H^{\un})} = \varphi^{\un}(H^{\un}),
  \]
  proving surjectivity of $\varphi^{\un}$.
\end{proof}

Returning to the proof of Lemma \ref{lem:surjective}: The image of $f_*$ has finite index, so Lemma \ref{lem:coker} implies $\pi_1^{\Q_p}(f)$ is surjective, proving the first claim.

By \cite[Cor.\ 4.19]{DN18}, there is a comparison isomorphism
\[
\Pi_Y^{\dR} \tensor_{L} B_{\dR} \cong \Pi_Y \tensor_{\Q_p} B_{\dR},
\]
and likewise for $X$ in place of $Y$. Since $B_{\dR}$ is faithfully flat over $\Q_p$ and $L$, surjectivity of $\Pi_Y \to \Pi_X$ implies surjectivity of the map of de Rham fundamental groups
\[
\pi_1^{\dR}(f)\colon \Pi_Y^{\dR} \onto \Pi_X^{\dR}.
\]
This completes the proof of Lemma \ref{lem:surjective}.
\end{proof}

This same map $\pi_1^{\dR}(f)$ is also given by taking de Rham fundamental groups of the pullback functor $f^*\colon \Un(X) \to \Un(Y)$ of categories of unipotent vector bundles with integrable connection.

\section{Selmer varieties and unipotent Albanese maps}
\label{section:selmer-varieties}
Here, we describe the \'etale and de Rham realizations of the Selmer varieties, summarizing the relevant parts of \cite{Kim-alb}. We also describe the unipotent Albanese maps that act as replacements for the Abel--Jacobi map.

For the remainder of the paper, $Y$ and $X$ are smooth projective curves of genus at least $2$ defined over $\Q$, and there is a dominant map $f_K\colon Y_K \to X_K$, where $K/\Q$ is a finite Galois extension of degree $d$.
Let $g$ be the genus of $X$.
Let $S$ be a finite set of rational primes such that $X$ and $Y$ both have good reduction away from $S$.
Fix an odd rational prime $p \notin S$ that splits completely in $K$, and let $T = S \cup \{p, \infty\}$.
Let $V_p := T_p J_X \tensor_{\Z_p} \Q_p$ be the $\Q_p$-Tate module of $J_X$.
Denote the absolute Galois group of any number field $L$ by $G_L$; let $G_T$ be the Galois group $\Gal(\Q_T/\Q)$, where $\Q_T$ is the maximal subfield of $\bar{\Q}$ unramified outside $T$; and fix an embedding $\bar{\Q} \inject \bar{\Q}_p$, which determines an injection $G_p := \Gal(\bar{\Q}_p/\Q_p) \inject G_\Q$.

The \'etale realization $H_f^1(G_T, U_{Y, n})$ is the moduli space of $G$-torsors for $U_{Y, n}$ that are unramified away from $T$ and crystalline at $p$, and $H_f^1(G_p, U_{Y, n})$ is the moduli space of $G_p$-torsors for $U_{Y, n}$ that are crystalline at $p$.

The de Rham realization $U_{Y, n}^{\dR}/F^0 U_{Y, n}^{\dR}$ is the moduli space of torsors for $U_{Y, n}^{\dR}$ with Frobenius structure and Hodge filtration that are \emph{admissible}, i.e., separately trivializable for the Frobenius structure and the Hodge filtration \cite[Prop.\ 1]{Kim-alb}.

For each $n \geq 1$, let $j_n^{\et, \glob}, j_n^{\et, \loc}, j_n^{\dR}$ be the unipotent Albanese maps defined in \cite{Kim-alb} as follows: Fix a base point $b \in Y(\Q)$. Then $j_n^{\et, \glob}$ (resp.\ $j_n^{\et, \loc}$) sends each $y \in Y(\Q)$ (resp.\ $y \in Y(\Q_p)$) to the class of the path torsor $[P_Y^{\et}(y)]$ in $H_f^1(G_T, U_{Y, n})$ (resp.\ $H_f^1(G_p, U_{Y, n})$). Likewise, $j_n^{\dR}$ sends each $y \in Y(\Q_p)$ to the class of the path torsor $[P_{Y, n}^{\dR}(y)]$ in $U_{Y, n}^{\dR}/F^0 U_{Y, n}^{\dR}$.

We have the following commutative diagram:
\[
\begin{tikzcd}
  Y(\Q) \ar[r, hook] \ar[d, "j_n^{\et, \glob}"'] & Y(\Q_p) \ar[d, "j_n^{\et, \loc}"'] \ar[dr, "j_n^{\dR}"] \\
  H_f^1(G_T, U_{Y, n}) \ar[r, "\loc_p"] \ar[d, "\pi_n^{\et, \glob}"'] & H_f^1(G_p, U_{Y, n}) \ar[r, "D"] \ar[d, "\pi_n^{\et, \loc}"'] & U_{Y, n}^\dR/F^0 U_{Y, n}^{\dR} \ar[d, "\pi_n^{\dR}"', twoheadrightarrow] \\
  H_f^1(G_T, W_{[n]}) \ar[r, "\loc_{p, W}"] & H_f^1(G_p, W_{[n]}) \ar[r, "D"] & W_{[n]}^{\dR}/F^0 W_{[n]}^{\dR}
\end{tikzcd}
\]
Here, the vertical maps $\pi_n^{\et, \glob}$, $\pi_n^{\et, \loc}$, and $\pi_n^{\dR}$ are functorially induced by $\pi_n$. As proved in \cite{Kim-alb}, if the image of the algebraic map $\log_{p, W} := D \circ \loc_{p, W}$ (and hence the image of $\log_p := D \circ \loc_p$) is not Zariski-dense, then $Y(\Q)$ is finite.

\section{Bounds for local and global Selmer varieties}
\label{section:bounds}

With the basic setup now in place, we turn to our main goal, which is to show that the image of the global Selmer variety in the de Rham local Selmer variety is not Zariski dense.  This has two parts:  giving an upper bound for the dimension of the global Selmer variety (filling the role played by the Mordell--Weil group in classical Chabauty) and giving a lower bound for the dimension of the de Rham local Selmer variety (filling the role classically played by the genus.)

We begin with a few remarks about the structure of $W$.
The canonical surjection $R_K \onto X_K$ induces a $G_K$-equivariant map $U_{R, n} \to U_{X, n}$. Restricting this to $W_{[n]}$ yields a map
\[
W_{[n]} \to U_{X, n}
\]
which is functorially induced by $f_K\colon Y_K \to X_K$, hence is surjective by Lemma \ref{lem:surjective}.
Also, as mentioned in Remark \ref{rem:weil-res}, there is a $G_K$-equivariant isomorphism of $\Q_p$-algebraic groups
\[
U_{R, n} \cong U_{X, n}^{\times d}.
\]

We can also take graded pieces: define $Z_{[n]} := W^{[n]}/W^{[n + 1]}$, which fits into an exact sequence
\[
0 \to Z_{[n]} \to W_{[n]} \to W_{[n - 1]} \to 0
\]
of $\Q_p[G_\Q]$-modules. Similarly, write $Z_n(U_R)$, $Z_n(U_Y)$, and $Z_n(U_X)$ for the $n$-th graded pieces (with respect to the lower central series) of the other algebraic groups. By construction of $W_{[n]}$, we obtain as above a $G_\Q$-equivariant injection
\[
Z_{[n]} \inject Z_n(U_R)
\]
and a $G_K$-equivariant surjection
\[
Z_{[n]} \onto Z_n(U_X).
\]

\subsection{Lower bounds for the de Rham local Selmer variety}
In order to obtain lower bounds for the dimension of $W_{[n]}^{\dR}/F^0 W_{[n]}^{\dR} $, we will need upper bounds for the dimension of the filtered piece $F^0 W_{[n]}^{\dR}$.

\begin{lemma}
  \label{lem:F0}
  There is a constant $A$ (depending only on $X$, $T$, and $d$) such that, for all $n \geq 1$,
  \[
  \dim F^0 W_{[n]}^{\dR} \leq An^g.
  \]
\end{lemma}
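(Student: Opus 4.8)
The plan is to transfer the estimate from the relative object $W_{[n]}^{\dR}$ onto the de Rham fundamental group of $X$ itself, and then bound the Hodge filtration degree by degree along the lower central series. First, since $p$ splits completely in $K$ there is an isomorphism $R_{\Q_p} \cong X_{\Q_p}^{\times d}$ of $\Q_p$-schemes, inducing an isomorphism $U_{R, n}^{\dR} \cong (U_{X, n}^{\dR})^{\times d}$ compatible with the Hodge filtration, so that $\dim F^0 U_{R, n}^{\dR} = d \cdot \dim F^0 U_{X, n}^{\dR}$. The inclusion $W^{\dR} \inject U_R^{\dR}$ induces, after quotienting by $(U_R^{\dR})^{n+1}$, a Hodge-compatible inclusion $W_{[n]}^{\dR} \inject U_{R, n}^{\dR}$, and intersecting with $F^0$ gives
\[
\dim F^0 W_{[n]}^{\dR} \leq \dim F^0 U_{R, n}^{\dR} = d \cdot \dim F^0 U_{X, n}^{\dR}.
\]
Thus it suffices to produce a constant $C$, depending only on the genus $g$ of $X$, with $\dim F^0 U_{X, n}^{\dR} \leq C n^g$.

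Next I would peel off the lower central series: for each $n$ there is an exact sequence $0 \to Z_n(U_X^{\dR}) \to U_{X, n}^{\dR} \to U_{X, n - 1}^{\dR} \to 0$ of unipotent groups whose Lie algebras carry mixed Hodge structures. Because morphisms of mixed Hodge structures are strict for the Hodge filtration, applying $F^0$ stays exact, and induction gives $\dim F^0 U_{X, n}^{\dR} = \sum_{i = 1}^{n} \dim F^0 Z_i(U_X^{\dR})$. So it is enough to prove $\dim F^0 Z_n(U_X^{\dR}) = O(n^{g - 1})$ with implied constant depending only on $g$.

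The heart of the matter is an explicit description of $Z_n(U_X^{\dR})$ as a Hodge structure. Put $H := Z_1(U_X^{\dR}) = H^1_{\dR}(X)^\vee$; this is pure of weight $-1$ with $\dim F^0 H = g$ and $F^1 H = 0$. Since $\Lie(U_X^{\dR})$ is metabelian and its associated graded is generated in degree one by $H$ (compatibly with the mixed Hodge structures), for $n \geq 2$ the piece $Z_n(U_X^{\dR})$ is a quotient Hodge structure of $\Sym^{n - 2}(H) \tensor \wedge^2 H$ — the degree-$n$ part of the free graded metabelian Lie algebra on $H$ being spanned by the left-normed brackets $[[\cdots[[x_{i_1}, x_{i_2}], x_{i_3}], \cdots], x_{i_n}]$ with $i_3 \leq \cdots \leq i_n$. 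Because $H$ has weight $-1$ and $F^1 H = 0$, the weight-$(-n)$ Hodge structure $\Sym^{n - 2}(H) \tensor \wedge^2 H$ has $F^0$ equal to its Hodge summand of type $(0, -n)$, namely $\Sym^{n - 2}(F^0 H) \tensor \wedge^2(F^0 H)$, of dimension $\binom{g + n - 3}{g - 1}\binom{g}{2}$, a polynomial in $n$ of degree $g - 1$. By strictness, $\dim F^0 Z_n(U_X^{\dR})$ is at most this; summing over $i \leq n$ with the hockey-stick identity yields $\dim F^0 U_{X, n}^{\dR} \leq g + \binom{g}{2}\binom{g + n - 2}{g} \leq C n^g$ for a suitable $C = C(g)$, and then $A := dC$ does the job (the dependence on $T$ allowed by the statement is not actually needed here).

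I expect the main obstacle to be the structural input in the last step: one must check that the associated graded of the metabelian de Rham fundamental Lie algebra of $X$ is generated in degree one by $H^1_{\dR}(X)^\vee$, and that the Lie bracket, the crystalline comparison furnishing the weight and Hodge structures, and the surjection from $\Sym^{n - 2}(H) \tensor \wedge^2 H$ are all morphisms of mixed Hodge structures, so that the strictness used twice above is legitimate. Granting that, the rest is an elementary dimension count, and the key numerical point is that the exponent $g$ is strictly below the growth exponent of $\dim W_{[n]}^{\dR}$, which is what will make the de Rham local Selmer variety large enough later on.
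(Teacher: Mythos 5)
Your first step—passing to the Hodge-compatible inclusion $W_{[n]}^{\dR} \inject (U_{X,n}^{\dR})^{\times d}$ furnished by $p$ being split in $K$, thereby reducing to the bound $\dim F^0 U_{X,n}^{\dR} = O(n^g)$—is exactly the paper's argument. For that remaining bound the paper simply cites the proof of Theorem~2 of Coates--Kim, whereas you re-derive it: strictness of the Hodge filtration along the lower central series, the surjection from $\Sym^{n-2}(H) \tensor \wedge^2 H$ (with $H = H^1_{\dR}(X)^\vee$) onto the graded pieces of the metabelian Lie algebra, and the observation that $F^1 H = 0$ forces $F^0$ of that tensor product to be the $(0,-n)$ summand of dimension $\binom{g+n-3}{g-1}\binom{g}{2}$. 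This is correct and makes explicit what the citation hides, but it is in effect a re-proof of the Coates--Kim estimate rather than a genuinely different route; the one point you should flag as needing care (and which you do flag) is that the Lie bracket and the comparison isomorphisms supplying the Hodge and weight filtrations on $\Lie U_{X,n}^{\dR}$ must be morphisms of filtered objects for the strictness arguments to apply.
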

\begin{proof}
  Since the $p$-adic Hodge filtration is defined over $\Q_p$, which contains $K$ (because we chose $p$ to be completely split in $K$), the $G_K$-equivariant inclusion
  \[
  W_{[n]}^{\dR} \inject (U_{X, n}^{\dR})^{\times d}
  \]
  is compatible with the Hodge structures, hence induces an inclusion of Hodge filtered pieces
  \[
  F^0 W_{[n]}^{\dR} \inject F^0 (U_{X, n}^{\dR})^{\times d} = (F^0 U_{X, n}^{\dR})^{\times d}.
  \]
  By the proof of \cite[Thm.\ 2]{CK}, there is a constant $A'$ such that $\dim F^0 U_{X, n}^{\dR} \leq A'n^g$ for all $n \geq 1$. Thus,
  \[
  \dim F^0 W_{[n]}^{\dR} \leq d \cdot \dim F^0 U_{X, n}^{\dR} \leq dA'n^g.
  \]
\end{proof}

\subsection{Upper bounds for the global Selmer variety}
In this section, we show that the dimension of the global cohomology space $H^1(G_T, W_{[n]})$ can be bounded by the dimension of ``abelianizations'' coming from the graded pieces $Z_{[n]}$, which can in turn be studied using the Euler characteristic formula.

\begin{lemma}
  \label{lem:H0}
  For all $n \geq 1$, $H^0(G_T, Z_{[n]}) = 0$.
\end{lemma}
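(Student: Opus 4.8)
The plan is to deduce vanishing of $H^0(G_T, Z_{[n]})$ from vanishing of $H^0$ of the corresponding graded piece for $X$ itself, using the injection $Z_{[n]} \inject Z_n(U_R)$ and the structure of $U_R$ as (geometrically) a $d$-fold power of $U_X$. First I would recall that $Z_n(U_X)$ is the degree-$n$ graded piece of the metabelianized Lie algebra of $X$; since $X$ has genus $g \geq 2$ and $J_X$ has CM, the Galois representations occurring in $Z_n(U_X)$ are built out of tensor powers of $V_p = T_p J_X \otimes \Q_p$, and a weight/purity argument (as in \cite{CK}) shows $H^0(G_T, Z_n(U_X)) = 0$ for all $n \geq 1$ --- indeed $Z_n(U_X)$ is pure of weight $-n < 0$, so it has no Galois invariants over any number field, in particular over $\Q$.

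Next I would exploit the restriction-of-scalars structure. Over $K$ we have $U_R \cong U_X^{\times d}$ as $G_K$-groups (Remark~\ref{rem:weil-res}), so $Z_n(U_R) \cong Z_n(U_X)^{\oplus d}$ as $G_K$-modules, and as a $G_\Q$-module $Z_n(U_R)$ is the induced module $\mathrm{Ind}_{G_K}^{G_\Q} Z_n(U_X)$ (this is precisely how Weil restriction interacts with the fundamental group: the $d$ factors are permuted by $\Gal(K/\Q)$ exactly as the embeddings $K \inject \bar\Q$ are). By Shapiro's lemma,
\[
H^0(G_\Q, Z_n(U_R)) \cong H^0(G_\Q, \mathrm{Ind}_{G_K}^{G_\Q} Z_n(U_X)) \cong H^0(G_K, Z_n(U_X)) = 0,
\]
where the last equality is the weight argument above applied over the number field $K$ (again $Z_n(U_X)$ is pure of weight $-n < 0$). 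Since $Z_{[n]} \inject Z_n(U_R)$ is a $G_\Q$-equivariant injection, taking $G_T \subseteq G_\Q$-invariants (which is left exact) gives $H^0(G_T, Z_{[n]}) \hookrightarrow H^0(G_T, Z_n(U_R))$; and $H^0(G_T, Z_n(U_R)) \subseteq H^0(G_\Q, Z_n(U_R)) = 0$ since $G_T$ is a quotient of $G_\Q$ through which the representation factors (the module is unramified outside $T$). Hence $H^0(G_T, Z_{[n]}) = 0$.

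The main obstacle is making the weight argument fully rigorous in the metabelian (derived-series) setting rather than the lower-central-series setting used for the full $\Pi$: one must check that $Z_n(U_X)$, the $n$-th lower-central-series graded piece of the \emph{metabelianization} $U_X = \Pi_X/\Pi_X^{(3)}$, is still pure of weight $-n$ and hence has no $G_K$-invariants. This should follow because $U_X$ is a quotient of $\Pi_X$ compatible with the weight filtration coming from mixed Tate / comparison with the motivic fundamental group, so its graded pieces are quotients (as Galois modules) of the corresponding graded pieces of $\Pi_X$, which are known to be pure of weight $-n$; purity is inherited by subquotients. Alternatively one can argue concretely: $Z_1(U_X) = V_p$ has weight $-1$, $Z_2(U_X)$ is a quotient of $\wedge^2 V_p$ (via the cup product and the class of the curve) of weight $-2$, and the higher $Z_n(U_X)$ are subquotients of iterated tensor/bracket expressions in $V_p$ and $H^1$, all of strictly negative weight; in particular no nonzero $G_K$-invariant vector can occur because invariants would have weight $0$. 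Once purity (equivalently, strict negativity of weights) is in hand, the Shapiro-lemma reduction does the rest.
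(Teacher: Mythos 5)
Your proof is correct and is essentially the paper's argument: inject $Z_{[n]}$ into $Z_n(U_R)$, reduce to $H^0$ of $Z_n(U_X)$ over $G_K$ using the product structure of $U_R$ over $K$, and kill that $H^0$ with a weight argument. The only stylistic differences are that you phrase the reduction to $K$ via Shapiro's lemma on the induced module, whereas the paper simply restricts invariants along $G_{K,T} \subseteq G_T$ (so it needs only the $G_K$-isomorphism $Z_n(U_R) \cong Z_n(U_X)^{\oplus d}$, not the finer induced-module structure), and you state the weight input globally (purity of weight $-n$, inherited by the metabelian subquotient) where the paper cites the Frobenius-weight-$n$ statement at the split prime $p$ from \cite[\S 3]{CK}.
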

\begin{proof}
  The injection $Z_{[n]} \inject Z_n(U_R)$ induces an injection
  \[
  H^0(G_T, Z_{[n]}) \inject H^0(G_T, Z_n(U_R)).
  \]
  As explained in \cite[\S3]{CK}, $Z_n(U_X)$ has Frobenius weight $n$. Since $p$ splits completely in $K$, we have $H^0(G_{K, T}, Z_n(U_X)) = 0$ and a Frobenius-equivariant injection
  \[
  H^0(G_T, Z_n(U_R)) \inject H^0(G_{K, T}, Z_n(U_R)) \cong H^0(G_{K, T}, Z_n(U_X))^{d} = 0.
  \]
\end{proof}

\begin{lemma}
  \label{lem:abelianize}
  For all $n \geq 1$,
  \[
  \dim H^1(G_T, W_{[n]}) \leq \sum_{i=1}^{n} \dim H^1(G_T, Z_{[i]}).
  \]
\end{lemma}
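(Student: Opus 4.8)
The plan is to induct on $n$, peeling off one graded piece at a time via the central extensions
\[
0 \to Z_{[n]} \to W_{[n]} \to W_{[n-1]} \to 0
\]
recorded above. For the base case $n = 1$, note that $W^{[1]} = W \cap U_R = W$ and $W^{[2]} = W \cap U_R^2$, so $W_{[1]} = W/W^{[2]} = Z_{[1]}$ and the asserted inequality is an equality. So I would assume $n \geq 2$ and that the bound holds with $n$ replaced by $n-1$.

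For the inductive step, first I would check that the displayed sequence is a \emph{central} extension of $\Q_p$-algebraic groups with $G_T$-action. Since $W \subseteq U_R$ and $W^{[n]} = W \cap U_R^n \subseteq U_R^n$, we get $[W, W^{[n]}] \subseteq [U_R, U_R^n] = U_R^{n+1}$, and also $[W, W^{[n]}] \subseteq W$; hence $[W, W^{[n]}] \subseteq W \cap U_R^{n+1} = W^{[n+1]}$, which says precisely that $Z_{[n]} = W^{[n]}/W^{[n+1]}$ is central in $W_{[n]} = W/W^{[n+1]}$. Next I would feed this central extension into the exact sequence of continuous nonabelian cohomology
\[
H^1(G_T, Z_{[n]}) \to H^1(G_T, W_{[n]}) \xrightarrow{q} H^1(G_T, W_{[n-1]}),
\]
and invoke the standard fact about central extensions that $H^1(G_T, Z_{[n]})$ acts on $H^1(G_T, W_{[n]})$ with orbits equal to the nonempty fibers of $q$. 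By Kim's representability results \cite{Kim-alb}, all three cohomology sets are affine $\Q_p$-varieties and $q$ together with this action are morphisms, so every fiber of $q$ has dimension at most $\dim H^1(G_T, Z_{[n]})$. Applying the fiber-dimension inequality $\dim V \leq \dim \overline{q(V)} + \max_{w} \dim q^{-1}(w)$ for a morphism $q\colon V \to V'$ of finite-type schemes then yields
\[
\dim H^1(G_T, W_{[n]}) \leq \dim H^1(G_T, W_{[n-1]}) + \dim H^1(G_T, Z_{[n]}),
\]
and combining with the inductive hypothesis completes the induction.

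The argument is essentially formal once Kim's machinery is in place; the two points that require care are the centrality of $Z_{[n]}$ in $W_{[n]}$ (handled above) and the compatibility of the whole nonabelian cohomology exact sequence, the $H^1(G_T, Z_{[n]})$-action, and the ``orbits $=$ fibers'' identification with the algebraic structure on these cohomology varieties, so that the set-theoretic statement genuinely bounds fiber dimensions. I expect this compatibility — in effect, rerunning the representability and structure arguments of \cite{Kim-alb, CK} for the specific group $W_{[n]}$ — to be the main thing one must pin down, although it is by now routine in this circle of ideas.
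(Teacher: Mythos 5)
Your argument is correct and essentially reproduces the paper's proof: the same induction on the filtration $W^{[n]}$, the same extension $0 \to Z_{[n]} \to W_{[n]} \to W_{[n-1]} \to 0$, and the same per-step bound $\dim H^1(G_T, W_{[n]}) \leq \dim H^1(G_T, W_{[n-1]}) + \dim H^1(G_T, Z_{[n]})$. The only real difference is cosmetic: the paper invokes the claim in the proof of \cite[Prop.~2]{Kim-siegel} (after using Lemma~\ref{lem:H0} to get $H^0(G_T, W_{[n-1]}) = 0$, hence a genuine torsor structure on the fibers), while you re-derive centrality of $Z_{[n]}$ in $W_{[n]}$ by hand and rely on the weaker orbit-equals-fiber identification together with the fiber-dimension inequality, thereby sidestepping the $H^0$-vanishing; the algebraicity of the whole picture that you correctly flag as the remaining point to pin down is precisely what Kim's cited result supplies.
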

\begin{proof}
  By Lemma \ref{lem:H0}, $H^0(G_T, W_{[n]}) \subseteq H^0(G_T, W_{[n - 1]})$ for each $n \geq 2$, so $H^0(G_T, W_{[n]}) \subseteq H^0(G_T, W_{[1]}) = H^0(G_T, Z_{[1]}) = 0$. By the claim in the proof of \cite[Prop.\ 2]{Kim-siegel} (noting that the proof works without modification for the filtration $W^{[n]}$, not just for the lower central series), the exact sequence
  \[
  0 \to Z_{[n]} \to W_{[n]} \to W_{[n - 1]} \to 0
  \]
  induces an exact sequence
  \[
  0 = H^0(G_T, W_{[n - 1]}) \to H^1(G_T, Z_{[n]}) \to H^1(G_T, W_{[n]}) \to H^1(G_T, W_{[n - 1]}) \overset{\delta}{\to} H^2(G_T, Z_{[n]})
  \]
  in the sense that $H^1(G_T, W_{[n]})$ is a $H^1(G_T, Z_{[n]})$-torsor over the subvariety of $H^1(G_T, W_{[n-1]})$ given by the kernel of the boundary map $\delta$. Thus,
  \[
  \dim H^1(G_T, W_{[n]}) \leq \dim H^1(G_T, W_{[n-1]}) + \dim H^1(G_T, Z_{[n]}),
  \]
  and the result follows by induction.
\end{proof}

\begin{lemma}
  \label{lem:H3+}
  For all $r \geq 3$, we have $H^r(G_T, Z_{[n]}) = 0$.
\end{lemma}
\begin{proof}
  Let $\Lambda \subset Z_{[n]}$ be a $G_T$-equivariant lattice, i.e., a $\Z_p[G_T]$-submodule with $\Lambda \otimes_{\Z_p} \Q_p \cong Z_{[n]}$. Then $M = Z_{[n]}/\Lambda$ is a discrete torsion $G_T$-module. By \cite[Prop.\ 8.3.18]{NSW08}, the $p$-cohomological dimension of $G_T$ is at most $2$ for $p$ odd, so $H^r(G_T, M) = 0$.

  By \cite[Ch.\ I, Cor.\ 4.15]{Milne-ADT}, the profinite group $G_T$ satisfies condition $(F_p)$ of \cite[Remark 1]{Jannsen89}, so we also have $H^r(G_T, \Lambda) = \varprojlim_j H^r(G_T, \Lambda/p^j \Lambda) = \varprojlim_j 0 = 0$. Applying cohomology to the short exact sequence $0 \to \Lambda \to Z_{[n]} \to M \to 0$, we obtain $H^r(G_T, Z_{[n]}) = 0$.
\end{proof}

Combining Lemmas \ref{lem:H0} and \ref{lem:H3+} with the Euler characteristic formula \cite[Lemma 2]{Jannsen89}, for each $n \geq 1$, we obtain
\[
\dim H^1(G_T, Z_{[n]}) = \dim H^2(G_T, Z_{[n]}) + \dim Z_{[n]} - \dim Z_{[n]}^{+},
\]
where $Z_{[n]}^{+}$ denotes the positive eigenspace of the action of complex conjugation on $Z_{[n]}$. So to bound $H^1(G_T, Z_{[n]})$, we can work instead with $H^2(G_T, Z_{[n]})$ and $Z_{[n]}^{+}$.

We now turn to the problem of bounding the dimension of $H^2(G_T, Z_{[n]})$.

\begin{lemma}
  \label{lem:H2}
  Suppose there is a number field $L/K$ and a constant $B$ (depending only on $X$ and $T$) such that, for all $n \neq 1$,
  \[
  \sum_{i=1}^{n} \dim H^2(G_{L, T}, Z_{i}(U_X)) \leq Bn^{2g - 1}.
  \]
  Then for all $n \geq 1$,
  \[
  \sum_{i=1}^{n} \dim H^2(G_T, Z_{[i]}) \leq dBn^{2g - 1},
  \]
  where $d = [K : \Q]$.
\end{lemma}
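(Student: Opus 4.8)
\emph{Proof plan.} The idea is to reduce the bound for the ``relative'' graded pieces $Z_{[i]}$ over $\Q$ to the hypothesized bound for the graded pieces $Z_i(U_X)$ over the larger field $L$, in three moves: restrict cohomology from $G_T$ down to $G_{K,T}$ and then to $G_{L,T}$; use the $d$-fold product decomposition of $U_R$ over $K$ to exhibit $Z_{[i]}$ as an iterated extension of submodules of $Z_i(U_X)$; and exploit that over $L$ the module $Z_i(U_X)$ is semisimple, so that these submodules split off. First I would note that, since $[K:\Q]=d$ and $[L:K]$ are invertible in $\Q_p$, the composition of restriction with corestriction is multiplication by the index on $\Q_p$-coefficient cohomology, so the restriction maps $H^2(G_T,-)\to H^2(G_{K,T},-)\to H^2(G_{L,T},-)$ are injective. (Here I enlarge $T$ if necessary so that $K,L\subseteq\Q_T$; this is harmless.) Next, by Remark~\ref{rem:weil-res} the $G_\Q$-equivariant injection $Z_{[i]}\inject Z_i(U_R)$ realizes $Z_{[i]}$ as a $G_{K,T}$-submodule of a direct sum of $d$ copies of $Z_i(U_X)$; filtering $Z_{[i]}$ by the kernels $F^{(m)}:=\ker\bigl(Z_{[i]}\to\bigoplus_{j\le m}Z_i(U_X)\bigr)$ of the partial coordinate projections (these are $G_{K,T}$-equivariant, again by Remark~\ref{rem:weil-res}) gives a filtration $Z_{[i]}=F^{(0)}\supseteq F^{(1)}\supseteq\dots\supseteq F^{(d)}=0$ whose $m$-th graded piece $M_{i,m}:=F^{(m-1)}/F^{(m)}$ embeds $G_{K,T}$-equivariantly into $Z_i(U_X)$. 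Adding up the long exact sequences of these extensions and then restricting to $G_{L,T}$ gives
\[
\dim H^2(G_T,Z_{[i]})\ \le\ \dim H^2(G_{K,T},Z_{[i]})\ \le\ \sum_{m=1}^{d}\dim H^2(G_{K,T},M_{i,m})\ \le\ \sum_{m=1}^{d}\dim H^2(G_{L,T},M_{i,m}).
\]

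It remains to bound each $\dim H^2(G_{L,T},M_{i,m})$ by $\dim H^2(G_{L,T},Z_i(U_X))$. This is where the hypothesis really bites: in the situation where it is known to hold --- namely, by \cite[Thm.\ 0.1]{CK}, when $J_X$ acquires CM after extension to $L$ --- the representation $V_p=T_pJ_X\tensor\Q_p$ restricted to $G_{L,T}$ is a direct sum of characters, hence semisimple, and therefore so is every subquotient of $V_p^{\tensor i}$; in particular $Z_i(U_X)$ is a semisimple $G_{L,T}$-module. Hence each submodule $M_{i,m}\inject Z_i(U_X)$ is a direct summand, so $H^2(G_{L,T},M_{i,m})$ is a direct summand of $H^2(G_{L,T},Z_i(U_X))$ and $\dim H^2(G_{L,T},M_{i,m})\le\dim H^2(G_{L,T},Z_i(U_X))$. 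Feeding this into the display and summing over $i=1,\dots,n$ yields
\[
\sum_{i=1}^{n}\dim H^2(G_T,Z_{[i]})\ \le\ d\sum_{i=1}^{n}\dim H^2(G_{L,T},Z_i(U_X))\ \le\ dBn^{2g-1}.
\]

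The crux is the last step: since $H^2$ is not left exact, a submodule need not have smaller $H^2$ than the module containing it, and the argument above escapes this only by invoking semisimplicity of $Z_i(U_X)$ over $G_{L,T}$ --- which is precisely why the CM hypothesis is the natural one to impose. If one wishes to avoid semisimplicity, the same filtration together with the global Euler characteristic formula~\cite[Thm.\ I.5.1]{Milne-ADT} (and the vanishing $H^0(G_T,Z_i(U_R))=0$, as in Lemma~\ref{lem:H0}, together with $\operatorname{cd}_p G_T=2$ for $p$ odd) still gives $\sum_{i=1}^{n}\dim H^2(G_T,Z_{[i]})\le dBn^{2g-1}+O(n^{2g-1})$, the error term being controlled by $\dim U_{R,n}-\dim W_{[n]}\le(d-1)\dim U_{X,n}$; this weaker estimate would already suffice for Theorem~\ref{thm:dim-hyp}, since what is ultimately needed there is a comparison of leading coefficients against $\dim W_{[n]}^{\dR}/F^0W_{[n]}^{\dR}$.
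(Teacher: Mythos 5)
Your proposal is correct and follows the same core strategy as the paper: pass from $G_T$ to $G_{L,T}$ (corestriction/restriction, using that the index is invertible on a $\Q_p$-module), embed $Z_{[i]}$ into $Z_i(U_X)^{\oplus d}$ over $L$, and use semisimplicity over $L$ to deduce $\dim H^2(G_{L,T}, Z_{[i]}) \leq d\,\dim H^2(G_{L,T}, Z_i(U_X))$. Two differences are worth flagging. First, you interpose a filtration of $Z_{[i]}$ by kernels of partial coordinate projections and bound $H^2$ of the graded pieces; the paper dispenses with this by observing directly that $Z_i(U_X)^{\oplus d}$ is semisimple over $G_{L,T}$, so $Z_{[i]}$ is itself a direct summand, and the bound follows in one step. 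Your filtration is valid (the long exact sequence does give $\dim H^2 \leq \sum \dim H^2$ of the graded pieces) but it is extra machinery. Second, and more substantively, you derive the needed semisimplicity of $Z_i(U_X)$ over $G_{L,T}$ from the assumption that $J_X$ acquires CM over $L$, so that $V_p\vert_{G_L}$ is a sum of characters. The paper instead invokes the unconditional Faltings--Tate semisimplicity theorem, which is the right move here: the lemma as stated only assumes the cohomological bound over $L$, not CM, and is proved in that generality. (The paper's remark following the lemma notes that the CM route avoids Faltings--Tate, but treats this as a simplification available in the eventual application, not as the proof of the lemma.) Finally, your closing aside --- that the Euler characteristic formula could replace semisimplicity --- is not right as sketched. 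That formula expresses $\dim H^2$ in terms of $\dim H^1$, which is the quantity the whole argument is ultimately trying to bound, so it cannot yield an a priori upper bound on $H^2$ without a separate input; moreover $\dim U_{X,n}$ grows like $n^{2g}$, so the claimed error term $(d-1)\dim U_{X,n}$ is not $O(n^{2g-1})$ and would swamp the main term in the application.
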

\begin{proof}
  Since $G_{L, T} \leq G_T$ is a subgroup of some finite index $m$, we have a corestriction map
  \[
  H^2(G_{L, T}, Z_{[i]}) \to H^2(G_T, Z_{[i]})
  \]
  for each $i$. Precomposing corestriction with restriction yields multiplication by $m$. Since $Z_{[i]}$ is a divisible abelian group, multiplication by $m$ is an automorphism, so the above corestriction map is surjective, and it suffices to bound $\dim H^2(G_{L, T}, Z_{[i]})$.

  By the semisimplicity theorem of Faltings and Tate, the $\Q_p$-Tate module $Z_1(U_X) = V_p = T_p J_X \tensor_{\Z_p} \Q_p$ is a semisimple $G_L$-representation. Hence, $(V_p^{\tensor i})^{\oplus d}$ is semisimple. As in \cite{CK}, we have a surjection $V_p^{\tensor i} \onto Z_i(U_X)$ which splits, realizing $Z_i(U_X)$ as a direct summand of $V_p^{\tensor i}$.

  We also have an inclusion of $\Q_p[G_{L, T}]$-modules
  \[
  Z_{[i]} \inject Z_i(U_X)^{\oplus d}.
  \]
  But $Z_i(U_X)^{\oplus d}$ is semisimple, so $Z_{[i]}$ is in fact a direct summand. Since cohomology preserves direct summands, it follows that
  \[
  \dim H^2(G_{L, T}, Z_{[i]}) \leq d \cdot \dim H^2(G_{L, T}, Z_i(U_X)).
  \]
  Summing the above over all $1 \leq i \leq n$, we are done.
\end{proof}

\begin{remark}
  Suppose the Jacobian $J_X$ of $X$ is isogenous over $\bar{\Q}$ to a product of CM abelian varieties. Then there is a number field $L/K$ such that:
  \begin{itemize}
    \item The complex multiplication and decomposition of $J_X$ are defined over $L$;
    \item $K(J_X[p]) \subseteq L$; and
    \item the image $\Gamma$ of the associated Galois representation
      \[
      \rho_L: \Gal(\bar{L}/L) \to \GL(V_p)
      \]
      is isomorphic to $\Z_p^r$ for some integer $r$.
  \end{itemize}
  By \cite[Thm.\ 1]{CK}, there is a constant $B$ (depending on $X$ and $T$) such that
  \[
  \sum_{i=1}^{n} \dim H^2(G_{L, T}, Z_i(U_X)) \leq Bn^{2g - 1},
  \]
  so the conclusion of Lemma \ref{lem:H2} holds. In this case, $V_p$ is isomorphic as a $G_L$-representation to a direct sum of characters, so we also do not need to appeal to the semisimplicity theorem in the proof of Lemma \ref{lem:H2}. (In particular, the results of \S\ref{section:superelliptic} do not rely on the semisimplicity theorem.)
\end{remark}

\section{Bounding the invariants of complex conjugation}

\label{section:conjugation}

We need a lower bound on the dimension $Z_{[i]}^{+}$, the subgroup of $Z_{[i]}$ on which complex conjugation acts as the identity. We begin with a combinatorial lemma.

\begin{lemma}
  \label{lem:symm-powers}
  Let $k$ be a field not of characteristic $2$. Let $V$ be a $k$-vector space of finite dimension $m$. Let $c\colon V \to V$ be a linear involution which is not multiplication by $\pm 1$. Then
  \[
  \lim_{n \to \infty} \frac{\dim \Sym^n(V)^{+}}{\dim \Sym^n(V)} = \frac{1}{2}.
  \]
\end{lemma}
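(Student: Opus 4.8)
The plan is to diagonalize the involution and reduce everything to a generating-function computation. Since $k$ does not have characteristic $2$, the operators $\tfrac{1}{2}(1+c)$ and $\tfrac{1}{2}(1-c)$ are complementary idempotents, so $V = V^{+} \oplus V^{-}$, where $c$ acts as $+1$ on $V^{+}$ and as $-1$ on $V^{-}$. Write $a = \dim V^{+}$ and $b = \dim V^{-}$. The hypothesis that $c$ is not $\pm 1$ forces $a \geq 1$ and $b \geq 1$, so in particular $m = a + b \geq 2$. I would then decompose
\[
\Sym^{n}(V) = \bigoplus_{i + j = n} \Sym^{i}(V^{+}) \tensor \Sym^{j}(V^{-}),
\]
on which $c$ acts on the $(i,j)$ summand by the scalar $(-1)^{j}$; hence $\Sym^{n}(V)^{+}$ is the direct sum of the summands with $j$ even.

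Next, set $p_{n} = \dim \Sym^{n}(V)^{+}$. Using $\dim \Sym^{k}(U) = \binom{k + \dim U - 1}{\dim U - 1}$ (valid in every characteristic) together with the standard even-part identity $\sum_{j \text{ even}} c_{j} t^{j} = \tfrac{1}{2}\bigl(f(t) + f(-t)\bigr)$ applied to $f(t) = \sum_{j} \binom{j+b-1}{b-1} t^{j} = (1-t)^{-b}$, one obtains the generating function
\[
\sum_{n \ge 0} p_{n} t^{n} = \frac{1}{(1-t)^{a}} \cdot \frac{1}{2}\left( \frac{1}{(1-t)^{b}} + \frac{1}{(1+t)^{b}} \right) = \frac{1}{2}\,\frac{1}{(1-t)^{m}} + \frac{1}{2}\,\frac{1}{(1-t)^{a}(1+t)^{b}}.
\]
The first term on the right contributes exactly $\tfrac{1}{2}\dim \Sym^{n}(V)$ to the coefficient of $t^{n}$, so it remains only to show the second term is asymptotically negligible.

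For this, I would take a partial-fraction decomposition of $(1-t)^{-a}(1+t)^{-b}$, which expresses its $n$-th Taylor coefficient as a fixed $\Q$-linear combination of the quantities $\binom{n+k-1}{k-1}$ for $1 \le k \le a$ and $(-1)^{n}\binom{n+l-1}{l-1}$ for $1 \le l \le b$. Each such quantity is, up to sign, a polynomial in $n$ of degree at most $\max(a,b) - 1$, and since $a, b \ge 1$ and $a + b = m$ we have $\max(a,b) \le m - 1$; hence this coefficient is $O(n^{m-2})$. On the other hand $\dim \Sym^{n}(V) = \binom{n+m-1}{m-1}$ is a polynomial in $n$ of degree $m - 1 \ge 1$ with leading coefficient $1/(m-1)!$, so it tends to infinity and dominates $n^{m-2}$. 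Dividing $p_{n} = \tfrac{1}{2}\dim \Sym^{n}(V) + O(n^{m-2})$ by $\dim \Sym^{n}(V)$ and letting $n \to \infty$ then yields the limit $\tfrac{1}{2}$.

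I do not anticipate a genuine obstacle: the computation is elementary once the involution is diagonalized. The only points needing a little care are confirming that no hypothesis beyond $\operatorname{char} k \neq 2$ is used (the splitting $V = V^{+}\oplus V^{-}$ needs only $1/2 \in k$, and the dimension formula for symmetric powers is characteristic-free), and verifying the degree comparison $\max(a,b) \le m-1$, which is exactly where the hypothesis $c \neq \pm 1$ enters and which makes the error term have strictly smaller growth than $\dim \Sym^{n}(V)$. One could avoid partial fractions by bounding the coefficients of $(1-t)^{-a}(1+t)^{-b}$ combinatorially, but the partial-fraction route makes the degree bookkeeping cleanest.
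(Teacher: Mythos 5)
Your proof is correct and is essentially the same as the paper's: you track $p_n = \dim\Sym^n(V)^+$ and split its generating function as $\frac12(1-t)^{-m} + \frac12(1-t)^{-a}(1+t)^{-b}$, whereas the paper works directly with the difference $a_n = \dim\Sym^n(V)^+ - \dim\Sym^n(V)^-$ whose generating function is $(1-t)^{-m^+}(1+t)^{-m^-}$; since $p_n = \tfrac12(\dim\Sym^n V + a_n)$, these are the same computation. Both arguments then bound the error coefficient by $O(n^{\max(a,b)-1}) = O(n^{m-2})$ via partial fractions, using $c \neq \pm 1$ to ensure $\max(a,b) \le m-1$.
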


\begin{proof}
  Define $a_n = \dim \Sym^n(V)^{+} - \dim \Sym^n(V)^{-}$. Since $\dim \Sym^n(V) = \binom{n + m - 1}{m - 1}$, which is a polynomial in $n$ of degree $m - 1$, it suffices to show that to show that $a_n = O(n^{m - 2})$. Write $m^+, m^-$ for the dimension of the $+1$ and $-1$ eigenspaces of $c$ on $V$. Then we have a generating function identity
  \[
  \sum_{n=0}^{\infty} a_n t^n = (1 - t)^{-m^+} (1 + t)^{-m^{-}} = \sum_{i=1}^{m^+} \frac{A_i}{(1 - t)^i} + \sum_{i=1}^{m^-} \frac{B_i}{(1 + t)^i},
  \]
  where $A_i, B_i \in \Q$ are given by partial fraction decomposition. The power series coefficients of $(1 \pm t)^{-i}$ are of order $O(n^{i - 1})$, so $a_n = O(n^{\max\{m^+, m^-\} - 1})$. Since $\max\{m^+, m^-\} < m$, we have $a_n = O(n^{m - 2})$, completing the proof.
\end{proof}

\begin{remark}
  Lemma \ref{lem:symm-powers} is equivalent to the following combinatorial statement: Suppose we have $a \geq 1$ labelled bins colored blue, $b \geq 1$ labelled bins colored green, and $n$ indistinguishable balls. Then for $n \gg 0$, out of all the ways to place the $n$ balls into the $a + b$ bins, approximately half result in the total number of balls in green bins being even.

  Coates and Kim \cite[proof of Cor.\ 0.2, pp.\ 847--848]{CK} prove the special case of Lemma \ref{lem:symm-powers} where $a = b$, in which case there are additional symmetries that simplify the problem.
\end{remark}

\begin{lemma}
  \label{lem:Z+}
  There is a constant $C > 0$ (depending only on $X$ and $T$) such that, for all $n \geq 1$,
  \[
  \sum_{i=1}^{n} \dim Z_{[i]}^{+} \geq Cn^{2g}.
  \]
\end{lemma}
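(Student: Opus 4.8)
The plan is to bound $\sum_{i=1}^n \dim Z_{[i]}^+$ from below by comparing $Z_{[i]}$ with a piece of $Z_i(U_X)$ on which complex conjugation acts, and then applying Lemma~\ref{lem:symm-powers} to a symmetric power sitting inside the graded pieces of $U_X$. First I would recall that $Z_i(U_X)$ is a subquotient of $V_p^{\tensor i}$, and in fact (as in \cite{CK}) the graded Lie algebra of $U_X$ contains, via the lower central series of the free metabelian Lie algebra on $H_1$, large pieces built out of $\Sym^{i-1}(H_1) \tensor H_1$-type constituents, so that $\dim Z_i(U_X)$ grows like a polynomial of degree $2g-1$ in $i$ and contains $\Sym^{i-1}$ of a $2g$-dimensional space as a Galois-subrepresentation up to bounded-index pieces. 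The key point is that complex conjugation $c$ acts on $V_p = H^1_{\et}(X_{\bar\Q}, \Q_p)^\vee$ (or on $H_1$) as an involution which is \emph{not} scalar: since $X$ has genus $g \geq 1$ and is defined over $\Q$, the eigenspaces $H_1^+$ and $H_1^-$ are both nonzero (each has dimension $g$, by comparison with the real structure on $H_1(X(\C),\Q)$ and the fact that complex conjugation is an antiholomorphic involution). Hence Lemma~\ref{lem:symm-powers} applies to $c$ acting on $V := H_1$.

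The main step is then: the $G_\Q$-equivariant surjection $Z_{[i]} \onto Z_i(U_X)$ from \S\ref{section:bounds} is complex-conjugation-equivariant (complex conjugation is a genuine element of $G_\Q$, not just of $G_K$, and all the maps $W_{[n]} \to U_{X,n}$ used here are induced by $f_K$ and the Weil-restriction structure, which respect the $G_\Q$-action — more carefully, projection to the first factor is only $G_K$-equivariant, so instead I would use the $G_\Q$-equivariant inclusion $Z_{[i]} \inject Z_i(U_R)$ together with $Z_i(U_R) \cong \Res^K_\Q Z_i(U_X)$ as $G_\Q$-modules). Concretely, $\dim Z_{[i]}^+ \geq \dim Z_i(U_X)^+$ using the surjection $Z_{[i]}\onto Z_i(U_X)$: a $G_\Q$-equivariant surjection of $\Q_p[c]$-modules, with $c$ an involution and $p$ odd, carries the $+1$-eigenspace onto the $+1$-eigenspace, so $\dim Z_{[i]}^+ \geq \dim Z_i(U_X)^+$. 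It therefore suffices to show $\sum_{i=1}^n \dim Z_i(U_X)^+ \geq Cn^{2g}$. For this I would exhibit inside $\bigoplus_i Z_i(U_X)$ a copy of $\bigoplus_j \Sym^j(V)$ (or a positive fraction thereof) as $\Q_p[c]$-modules: from the structure of the metabelian quotient $U_X$, the graded pieces in degrees near $i$ contain $\Sym^{i-2}(V)\tensor \wedge^2 V$ or similar, all of whose dimensions are polynomials of degree $2g-1$ with positive leading term, and on each such piece complex conjugation acts compatibly with its action on the tensor factors. Applying Lemma~\ref{lem:symm-powers} (with $V = H_1$, so $m = 2g$, $m^+ = m^- = g \geq 1$, so $c$ is not $\pm 1$), the $+1$-eigenspace of $\Sym^j(V)$ has dimension $\tfrac12 \dim \Sym^j(V) + O(j^{2g-2}) \sim \tfrac12 \binom{j+2g-1}{2g-1}$, which is asymptotic to a positive multiple of $j^{2g-1}$; summing over $j \leq n$ gives a positive multiple of $n^{2g}$.

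The hard part will be pinning down exactly which Galois-stable, $c$-stable subspace of $\bigoplus_i Z_i(U_X)$ one uses, and checking that $c$ really does act on it through its action on $V$ so that the combinatorial lemma applies — i.e., transferring the purely linear-algebraic statement of Lemma~\ref{lem:symm-powers} to the graded pieces of the fundamental group Lie algebra, which carry a Galois action but whose $c$-action must be identified with the "symmetric-power" action on $\Sym^\bullet(H_1)$. In the CM case \cite{CK} this is done by explicit weight/character bookkeeping; here, since we only pass through $U_X = \Pi_X/\Pi_X^{(3)}$ and $V_p$ may not decompose into characters, I would instead argue structurally: the associated graded of the metabelian Lie algebra of $\Pi_X$ surjects onto (a bounded-index subspace of) $H_1 \tensor \Sym^{\bullet}(H_1)$ as a graded $G_\Q$-module (this is the free-metabelian-Lie-algebra computation underlying the degree-$(2g-1)$ growth already used in Lemmas~\ref{lem:H2} and such), and $c \in G_\Q$ acts diagonally on the tensor factors, reducing us to Lemma~\ref{lem:symm-powers} applied to $\Sym^j(H_1)$ as above. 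Finally I would note that all implied constants depend only on $X$ and $T$ (through $g$ and the bounded-index corrections), giving the claimed uniform constant $C$.
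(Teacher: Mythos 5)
Your proposal correctly identifies the combinatorial heart (Lemma~\ref{lem:symm-powers} applied to symmetric-power pieces inside the graded metabelian Lie algebra), but the reduction you propose has a genuine gap: the surjection $Z_{[i]} \onto Z_i(U_X)$ is \emph{not} $G_\Q$-equivariant. It is only $G_K$-equivariant, since the underlying map $Y \to X$ is defined only over $K$; projection from $U_R$ onto one $U_X$ factor is likewise only $G_K$-equivariant. In particular, unless $K$ is totally real this map does not commute with complex conjugation $c \in G_\Q$, so the inequality $\dim Z_{[i]}^+ \geq \dim Z_i(U_X)^+$ cannot be deduced the way you argue. You half-notice this and pivot to the $G_\Q$-equivariant inclusion $Z_{[i]} \inject Z_i(U_R)$, but an inclusion gives an \emph{upper} bound on the $+1$-eigenspace, not the lower bound you need, and you then revert to using the non-equivariant surjection anyway. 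This is precisely the obstruction the paper explicitly flags at the start of its proof of this lemma.

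The paper circumvents this by never transferring the $c$-action through the $G_K$-equivariant map. Instead it chooses $c$-eigenvectors $\bar f_1,\dots,\bar f_{2g(Y)}$ in $Z_1(U_Y)$, pushes their lifts forward via the genuinely $G_\Q$-equivariant map $U_f\colon U_Y \to W$ to get elements $f_i$ whose images in $Z_1(W)$ are still $c$-eigenvectors, and works inside the Lie subalgebra $L \subseteq \Lie(W)$ they generate. Complex conjugation acts on a bracket monomial $[\dots[f_{i_1},f_{i_2}],\dots,f_{i_n}]$ in the graded Lie algebra of $W$ by a sign $(-1)^k$ determined purely by the degree-one eigenvalues, so the $c$-eigenspace decomposition can be read off combinatorially. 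The surjection $L_{[n]} \onto Z_n(U_X)$ (which need not be $c$-equivariant) is used only to certify that these monomials are \emph{linearly independent} in $L_{[n]}$, a statement requiring no equivariance at all; then Lemma~\ref{lem:symm-powers} is applied to the span $V_{1,n}$ of brackets beginning with $f_1$, which has a basis indexed by monomials and hence the needed $\Sym^{n-1}$ structure. To repair your argument you would need to replace the transfer-to-$U_X$ step with some such direct computation of the $c$-action on a suitable subspace of $\bigoplus_i Z_{[i]}$.
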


\begin{proof}
  Let $c \in G_\Q$ be a complex conjugation.  We may choose a $\Q_p$-basis $\bar{f}_1, \dots, \bar{f}_{2g(Y)}$ of  $Z_1(U_Y)$ (in other words, of the $\Q_p$-Tate module of $Y$) such that $c(\bar{f}_i) = \pm\bar{f}_i$ for each $i = 1, \dots, 2g(Y)$.  Let $\tilde{f}_i$ be lifts of $\bar{f}_i$ to $U_Y$. Recall the $G_\Q$-equivariant map
  \[
  U_f\colon U_Y \to W.
  \]
  Let $f_i := U_f(\tilde{f}_i)$.

  We also have a surjective homomorphism $W \onto U_X$.  Recall that the Galois action on $W$ was defined to be the one inherited from the Galois action on $U_Y$; since the map $Y \to X$ is defined only over $K$, the map $U_Y \to U_X$, whence the map $W \to U_X$, is equivariant only for the Galois group $G_K$, not the whole of $G_\Q$.  In particular, if $K$ is not totally real, the map from $W$ to $U_X$ is not equivariant for complex conjugation.  We will see that this doesn't matter; using the purely combinatorial Lemma~\ref{lem:symm-powers}, we can lower-bound the conjugation-invariant part of $Z_{[i]}$ using only the group structure of $W$, no matter what the action of $c$ is, as we now explain.

  After reordering $f_1, \ldots, f_{2g(Y)}$ if necessary, we may assume that $f_1, \ldots, f_{2g}$ project to a basis $a_1,\ldots, a_{2g}$ of $Z_1(U_X)$.  The projection of $f_i$ to $Z_1(W)$ is an eigenvector for $c$; again reordering if necessary, we may assume the eigenvalue is $+1$ for $i = 1,\ldots, s$ and $-1$ for $i=s+1, \ldots, 2g$. 

  Let $L \subseteq \Lie(W)$ be the Lie subalgebra generated by $f_1, \dots, f_{2g}$, and let $L^{n}$ denote the $n$-th level of the lower central series of $L$. Let $L_{[n]}$ be the subspace of $Z_{[n]}$ generated by $L^{n}$. The homomorphism $W \onto U_X$ induces a homomorphism $L_{[n]} \onto Z_n(U_X)$.  By  \cite[proof of Cor.\ 0.2, p.\ 847]{CK}, a basis for $Z_n(U_X)$ when $n \geq 2$ is given by elements of the form 
  \[
  [ [ \dots [ [a_{i_1}, a_{i_2}], a_{i_3}], \dots], a_{i_n}]
  \]
  with $i_1 < i_2$ and $i_2 \geq i_3 \geq \ldots \geq i_n$.  In particular, this means the elements
  \[
  [ [ \dots [ [f_{i_1}, f_{i_2}], f_{i_3}], \dots], f_{i_n}]
  \]
  are linearly independent in $L_{[n]}$.  Note that $c$ acts on such an element by $\pm 1$; more precisely, it acts as $(-1)^k$ where $k$ is the number of the $i_1, \ldots, i_n$ which are greater than $s$.  Write $V_n <  L_{[n]}$ for the space spanned by the elements above.  

  We now consider three cases. 

  If $s=2g$, then $c$ acts as $1$ on $V_n$.  So $\dim V_n^{+} = \dim L_{[n]}$, which is bounded below by $C n^{2g-1}$ for all $n$.  This proves the lemma in this case.

  If $s=0$, then $c$ acts as $(-1)^n$ on $V_n$.  So  $\dim V_n^{+} = \dim L_{[n]}$ for all even $n$, and the lemma follows again.

  Now suppose $0 < s < 2g$.  Consider the space $V_{1,n} < V_n$ spanned by
  \[
  [ [ \dots [ [f_{1}, f_{i_2}], f_{i_3}], \dots], f_{i_n}].
  \]
  This basis for $V_{1,n}$ is naturally in bijection with the set of monomials $x_{i_2} \ldots x_{i_n}$ of degree $n-1$ in the variables $x_1, x_2, \ldots, x_{2g}$, excluding $x_1^{n - 1}$; thus $\dim V_{1,n}$ is on order $n^{2g-1}$ as $n$ grows.  A basis for $V_{1,n}^+$ is given by those monomials whose total degree in $x_{s+1}, \ldots, x_{2g}$ is even.  Lemma~\ref{lem:symm-powers} tells us precisely that
  \[
  \dim V_{1,n}^+ = (1/2) \dim V_{1,n} + o(\dim V_{1,n}) \geq C n^{2g-1}
  \]
  Once again, the lemma follows.
\end{proof}

\section{Proof of Theorem~\ref{thm:dim-hyp}}
\label{section:dimhyp}

Our goal is to prove that, for $n$ sufficiently large,
\[
\dim H_f^1(G_T, W_{[n]}) < \dim W_{[n]}^{\dR}/F^0 W_{[n]}^{\dR}.
\]

Recall that $H_f^1(G_T, W_{[n]})$ is a subvariety of $H^1(G_T, W_{[n]})$; it thus suffices to bound the dimension of the latter variety, which by Lemma \ref{lem:abelianize} and the Euler characteristic formula is bounded above as follows:
\begin{align*}
  \dim H^1(G_T, W_{[n]}) &\leq \sum_{i=1}^{n} \left( \dim Z_{[i]} + \dim H^2(G_T, Z_{[i]}) - \dim Z_{[i]}^{+} \right) \\
  &= \dim W_{[n]} + \sum_{i=1}^{n} \dim H^2(G_T, Z_{[i]}) - \sum_{i=1}^{n} \dim Z_{[i]}^{+}.
\end{align*}

By Lemma~\ref{lem:H2}, the contribution of $\sum_{i=1}^{n} \dim H^2(G_T, Z_{[i]})$ is $O(n^{2g-1})$.  By Lemma~\ref{lem:Z+}, we know $\sum_{i=1}^{n} \dim Z_{[i]}^{+}$ is bounded below by a multiple of $n^{2g}$.  Putting these facts together, we have
\[
\dim H^1(G_T, W_{[n]}) \leq \dim W_{[n]} - C n^{2g}
\]
for some $C > 0$.

On the other hand, by Lemma~\ref{lem:F0}, we have
\[
\dim F^0 W_{[n]}^{\dR} = O(n^g)
\]

Thus, for $n$ large enough, we have
\[
\dim H^1_f(G_T, W_{[n]}) \leq \dim H^1(G_T, W_{[n]}) < \dim W_{[n]} - \dim F^0 W_{[n]}^{\dR} = \dim W_{[n]}^{\dR}/F^0 W_{[n]}^{\dR}
\]
which is the desired result.

\begin{remark}
  The difficulty in applying the same technique over a number field $F$ other than $\Q$ is that
  \[
  \dim Z_{[n]} - \dim Z_{[n]}^{+}
  \]
  in the Euler characteristic formula is replaced with
  \[
  \sum_{v \text{ real}} (\dim Z_{[n]} - \dim Z_{[n]}^{+, v}) + \sum_{v \text{ complex}} \dim Z_{[n]},
  \]
  where the sums are over the real and complex places of $F$, and $Z_{[n]}^{+, v}$ is the $1$-eigenspace of the complex conjugation associated to a real place $v$.  For our argument to go through, this sum needs to be strictly smaller than $\dim Z_{[n]}$.  Obviously this is impossible if $F$ has a complex place, and even if $F$ is totally real, the summands corresponding to real places should have size about $(1/2) \dim Z_{[n]}$ for large $n$, which blocks the method from working when $F$ is larger than $\Q$. (This is not merely an artefact of the ``abelianization'' that replaces $W$ with its graded pieces; the long exact sequence in the proof of Lemma \ref{lem:abelianize} shows that this abelianization can add at most $\sum_{i=1}^{n} \dim H^2(G_{F, T}, Z_{[n]})$ to the dimension, so $\dim H^1(G_{F, T}, W_{[n]})$ is still too large.)

  As noted in Remark \ref{rem:number-fields}, this obstacle can be overcome by a different approach that essentially involves ``trading degree for dimension'' via restriction of scalars and then proving $p$-adic functional transcendence theorems to extend the method to higher-dimensional varieties.
\end{remark}

\section{Application to superelliptic curves}
\label{section:superelliptic}
We now combine Theorem \ref{thm:dim-hyp} with results of Poonen \cite{P} and Bogomolov and Tschinkel \cite{BT02} to prove finiteness of $C(\Q)$ whenever $C$ is a smooth proper curve over $\Q$ of genus at least $2$ such that there exists a map $C \to \proj^1$ with solvable automorphism group (e.g., when $C$ is superelliptic).

\begin{definition}
  Given varieties $X$ and $Y$ over a field $k$, we write $X \corr Y$ if, over $\bar{k}$, there exists an \'etale cover $Z \to X$ and a dominant morphism $Z \to Y$.
\end{definition}

\begin{theorem}
  \label{thm:curve-finite}
  Let $C$ be a smooth proper curve over $\Q$ such that $C \corr X$ for some curve $X$ over $\Q$ such that the Jacobian of $X$ has potential CM. Then $C(\Q)$ is finite.
\end{theorem}

\begin{proof}
  Since $C \corr X$, there exists an \'etale cover $\pi\colon Y \to C$ and a dominant morphism $f\colon Y \to X$ defined over $\bar{\Q}$, and hence over some Galois extension $L/\Q$ of finite degree $m$. Let $Y_1, \dots, Y_m$ be the Galois conjugates of $Y$, and let $Z$ be a connected component of the $\Q$-scheme $\Gal(L/\Q) \backslash (Y_1 \sqcup \dots \sqcup Y_m)$. Then $Z$ is a connected (but not necessarily geometrically connected) finite \'etale cover of $C$ defined over $\Q$. Let $\tilde{Z} \to Z \to C$ be the Galois closure of the connected \'etale cover $Z \to C$.


  By the Chevalley--Weil theorem \cite[\S4.2]{Serre-MW} there is a finite set of twists $\tilde{Z}^\tau/\Q$, each with a $\Q$-rational map $\pi^\tau$ to $C$, such that $C(\Q)$ is covered by $\pi^\tau(\tilde{Z}^\tau(\Q))$. (See e.g.\ \cite[p.\ 2]{Voloch12}.) Let $\{W_1, \dots, W_\nu\}$ be the set of all connected components of the twists $\tilde{Z}^\tau$ such that $W_i(\Q)$ is nonempty. By \cite[Cor.\ 4.5.14]{EGA-IV.2}, each $W_i$ is geometrically connected.

  Each $W_i$ is a smooth, proper, geometrically connected curve over $\Q$ with a dominant $\bar{\Q}$-morphism to $X$. Because $X$ has potential CM, the theorem of Coates--Kim \cite[Thm.\ 0.1]{CK}) implies that the hypotheses of Theorem \ref{thm:dim-hyp} hold for each $W_i$. Hence, $W_i$ is finite for all $i$, so $C(\Q) \subseteq \bigcup_{i=1}^{\nu} \pi^\tau(W_i(\Q))$ is also finite.
\end{proof}

Let $C_6$ be the smooth projective model of the curve with affine equation $y^2 = x^6 - 1$.

\begin{theorem}[\citet{BT02}; \citet{P}, Thm.\ 1.7, Cor.\ 1.8]
  \label{thm:poonen-corr}
  Let $C$ be a curve of genus $g(C) \geq 2$ over an algebraically closed field of characteristic zero. Let $G$ be a subgroup of $\Aut(C)$. Let $D = C/G$, and assume $D$ is of genus $g(D) \leq 2$. Suppose also that at least one of the following holds:
  \begin{enumerate}
    \item $g(D) \in \{1, 2\}$.
    \item $G$ is solvable.
    \item There are two distinct points of $D$ above which the ramification indices are not coprime.
    \item There are three points of $D$ above which the ramification indices are divisible by $2, 3, \ell$, respectively, where $\ell$ is a prime with either $\ell \leq 89$ or
      \[
      \ell \in \{101, 103, 107, 131, 167, 191\}.
      \]
  \end{enumerate}
  Then $C \corr C_6$.
\end{theorem}

\begin{corollary}
  Let $C$ be a smooth projective curve over $\Q$ of genus at least $2$. Suppose $C_{\bar{\Q}}$ satisfies the hypotheses of Theorem \ref{thm:poonen-corr}. Then $C(\Q)$ is finite.  In particular, if $C$ is a smooth superelliptic curve $y^d = f(x)$ of genus at least two, $C(\Q)$ is finite.
\end{corollary}
\begin{proof}
  Immediate from Theorem~\ref{thm:curve-finite}, Theorem~\ref{thm:poonen-corr}, and the fact that the Jacobian of $C_6$ is isogenous to the product of elliptic curves $y^2 = x^3 - 1$ and $y^2 = x^3 + 1$, which both have CM by the ring of integers of $\Q(\sqrt{-3})$.
\end{proof}

\begin{remark}
  In \cite[Conj.\ 3.1]{BT-small-fields}, Bogomolov and Tschinkel conjecture that $C \corr C'$ for \emph{any} smooth projective curves $C$ and $C'$ over $\bar{\Q}$ of genus at least $2$. If this conjecture is true, our method applies to show finiteness of $C(\Q)$ for every such curve $C$ over $\Q$.
\end{remark}

\begin{remark}
  If there exists a twist $X'$ of $C_6$ and an unramified correspondence $C \corr X'$ defined over $\Q$ (rather than merely over $\bar{\Q}$, as provided by Theorem \ref{thm:poonen-corr}), then finiteness of $C(\Q)$ follows immediately from \cite{CK}. However, we do not see any reason to expect such a twist to exist in general.
\end{remark}

\section*{Funding}
This work was supported by the National Science Foundation [DMS-1402620, RTG-1502553]; a Guggenheim Fellowship [to J.S.E.]; and the Simons Foundation [Simons Collaboration on Arithmetic Geometry, Number Theory, and Computation, to D.R.H.].

\section*{Acknowledgements}
The authors thank Minhyong Kim, Jackson Morrow, Isabel Vogt, and several anonymous reviewers for their helpful comments.

\bibliography{EllenbergHastSolvableNonabelianChabauty}

\begin{thebibliography}{37}
\providecommand{\natexlab}[1]{#1}
\providecommand{\url}[1]{\texttt{#1}}
\expandafter\ifx\csname urlstyle\endcsname\relax
  \providecommand{\doi}[1]{doi: #1}\else
  \providecommand{\doi}{doi: \begingroup \urlstyle{rm}\Url}\fi

\bibitem[SGA(2003)]{SGA1}
\emph{Rev\^{e}tements \'{e}tales et groupe fondamental ({SGA} 1)}, volume~3 of
  \emph{Documents Math\'{e}matiques (Paris)}.
\newblock Soci\'{e}t\'{e} Math\'{e}matique de France, Paris, 2003.
\newblock ISBN 2-85629-141-4.
\newblock S\'{e}minaire de g\'{e}om\'{e}trie alg\'{e}brique du Bois Marie
  1960--61. Directed by A. Grothendieck, with two papers by M. Raynaud. Updated
  and annotated reprint of the 1971 original [Lecture Notes in Math., 224,
  Springer, Berlin].

\bibitem[Balakrishnan et~al.(2019)Balakrishnan, Dogra, M\"{u}ller, Tuitman, and
  Vonk]{BDMTV17}
J.~Balakrishnan, N.~Dogra, J.~S. M\"{u}ller, J.~Tuitman, and J.~Vonk.
\newblock Explicit {C}habauty-{K}im for the split {C}artan modular curve of
  level 13.
\newblock \emph{Ann. of Math. (2)}, 189\penalty0 (3):\penalty0 885--944, 2019.
\newblock ISSN 0003-486X.
\newblock \doi{10.4007/annals.2019.189.3.6}.

\bibitem[Balakrishnan and Dogra(2019)]{BD18}
J.~S. Balakrishnan and N.~Dogra.
\newblock An effective {C}habauty-{K}im theorem.
\newblock \emph{Compos. Math.}, 155\penalty0 (6):\penalty0 1057--1075, 2019.
\newblock ISSN 0010-437X.
\newblock \doi{10.1112/s0010437x19007243}.

\bibitem[Balakrishnan and Dogra(2020)]{BD17}
J.~S. Balakrishnan and N.~Dogra.
\newblock Quadratic {C}habauty and rational points {II}: Generalised height
  functions on {S}elmer varieties.
\newblock \emph{Int. Math. Res. Not. IMRN}, 02 2020.
\newblock ISSN 1073-7928.
\newblock \doi{10.1093/imrn/rnz362}.

\bibitem[Betts(2019)]{Betts19}
L.~A. Betts.
\newblock The motivic anabelian geometry of local heights on abelian varieties.
\newblock \emph{ArXiv e-prints}, 2019.
\newblock arXiv:1706.04850v2 [math.NT].

\bibitem[Bogomolov and Qian(2017)]{BQ17}
F.~Bogomolov and J.~Qian.
\newblock On contraction of algebraic points.
\newblock \emph{Bull. Korean Math. Soc.}, 54\penalty0 (5):\penalty0 1577--1596,
  2017.
\newblock ISSN 1015-8634.
\newblock \doi{10.4134/BKMS.b160680}.

\bibitem[Bogomolov and Tschinkel(2002)]{BT02}
F.~Bogomolov and Y.~Tschinkel.
\newblock Unramified correspondences.
\newblock In \emph{Algebraic Number Theory and Algebraic Geometry}, volume 300
  of \emph{Contemp. Math.}, pages 17--25. Amer. Math. Soc., Providence, RI,
  2002.
\newblock \doi{10.1090/conm/300/05141}.

\bibitem[Bogomolov and Tschinkel(2007)]{BT-small-fields}
F.~Bogomolov and Y.~Tschinkel.
\newblock Algebraic varieties over small fields.
\newblock In \emph{Diophantine Geometry}, volume~4 of \emph{CRM Series}, pages
  73--91. Ed. Norm., Pisa, 2007.

\bibitem[Bosch et~al.(1990)Bosch, L\"{u}tkebohmert, and Raynaud]{BLR}
S.~Bosch, W.~L\"{u}tkebohmert, and M.~Raynaud.
\newblock \emph{N\'{e}ron Models}, volume~21 of \emph{Ergebnisse der Mathematik
  und ihrer Grenzgebiete (3) [Results in Mathematics and Related Areas (3)]}.
\newblock Springer-Verlag, Berlin, 1990.
\newblock ISBN 3-540-50587-3.
\newblock \doi{10.1007/978-3-642-51438-8}.

\bibitem[Chabauty(1941)]{Chabauty}
C.~Chabauty.
\newblock Sur les points rationnels des courbes alg\'{e}briques de genre
  sup\'{e}rieur \`a l'unit\'{e}.
\newblock \emph{C. R. Acad. Sci. Paris}, 212:\penalty0 882--885, 1941.
\newblock ISSN 0001-4036.

\bibitem[Coates and Kim(2010)]{CK}
J.~Coates and M.~Kim.
\newblock Selmer varieties for curves with {CM} {J}acobians.
\newblock \emph{Kyoto J. Math.}, 50\penalty0 (4):\penalty0 827--852, 2010.
\newblock ISSN 2156-2261.
\newblock \doi{10.1215/0023608X-2010-015}.

\bibitem[Coleman(1985)]{Col}
R.~F. Coleman.
\newblock Effective {C}habauty.
\newblock \emph{Duke Math. J.}, 52\penalty0 (3):\penalty0 765--770, 1985.
\newblock ISSN 0012-7094.
\newblock \doi{10.1215/S0012-7094-85-05240-8}.

\bibitem[D\'{e}glise and Nizio{\l}(2018)]{DN18}
F.~D\'{e}glise and W.~Nizio{\l}.
\newblock On {$p$}-adic absolute {H}odge cohomology and syntomic coefficients.
  {I}.
\newblock \emph{Comment. Math. Helv.}, 93\penalty0 (1):\penalty0 71--131, 2018.
\newblock ISSN 0010-2571.
\newblock \doi{10.4171/CMH/430}.

\bibitem[Deligne(1989)]{Deligne89}
P.~Deligne.
\newblock Le groupe fondamental de la droite projective moins trois points.
\newblock In \emph{Galois Groups over {${\bf Q}$} ({B}erkeley, {CA}, 1987)},
  volume~16 of \emph{Math. Sci. Res. Inst. Publ.}, pages 79--297. Springer, New
  York, 1989.
\newblock \doi{10.1007/978-1-4613-9649-9_3}.

\bibitem[Dogra(2020)]{Dogra-unlikely-intersections}
N.~Dogra.
\newblock Unlikely intersections and the {C}habauty--{K}im method over number
  fields.
\newblock \emph{ArXiv e-prints}, 2020.
\newblock arXiv:1903.05032v3 [math.NT].

\bibitem[Faltings(1983)]{Faltings83}
G.~Faltings.
\newblock Endlichkeitss\"{a}tze f\"{u}r abelsche {V}ariet\"{a}ten \"{u}ber
  {Z}ahlk\"{o}rpern.
\newblock \emph{Invent. Math.}, 73\penalty0 (3):\penalty0 349--366, 1983.
\newblock ISSN 0020-9910.
\newblock \doi{10.1007/BF01388432}.

\bibitem[Faltings(1984)]{Faltings84}
G.~Faltings.
\newblock Erratum: ``{F}initeness theorems for abelian varieties over number
  fields''.
\newblock \emph{Invent. Math.}, 75\penalty0 (2):\penalty0 381, 1984.
\newblock ISSN 0020-9910.
\newblock \doi{10.1007/BF01388572}.

\bibitem[Flynn and Wetherell(1999)]{FW99}
E.~V. Flynn and J.~L. Wetherell.
\newblock Finding rational points on bielliptic genus 2 curves.
\newblock \emph{Manuscripta Math.}, 100\penalty0 (4):\penalty0 519--533, 1999.
\newblock ISSN 0025-2611.
\newblock \doi{10.1007/s002290050215}.

\bibitem[Fontaine(1994)]{Fontaine94}
J.-M. Fontaine.
\newblock Le corps des p\'{e}riodes {$p$}-adiques.
\newblock In \emph{P\'{e}riodes $p$-adiques (Bures-sur-Yvette, 1988)}, number
  223 in Ast\'{e}risque, pages 59--111. Soci\'{e}t\'{e} Math\'{e}matique de
  France, Paris, 1994.
\newblock With an appendix by Pierre Colmez.

\bibitem[Grothendieck(1965)]{EGA-IV.2}
A.~Grothendieck.
\newblock \'{E}l\'{e}ments de g\'{e}om\'{e}trie alg\'{e}brique. {IV}. \'{E}tude
  locale des sch\'{e}mas et des morphismes de sch\'{e}mas. {II}.
\newblock \emph{Inst. Hautes \'{E}tudes Sci. Publ. Math.}, \penalty0
  (24):\penalty0 231, 1965.
\newblock ISSN 0073-8301.

\bibitem[Hain and Matsumoto(2003)]{HM03}
R.~Hain and M.~Matsumoto.
\newblock Weighted completion of {G}alois groups and {G}alois actions on the
  fundamental group of {$\mathbb{P}^1-\{0,1,\infty\}$}.
\newblock \emph{Compositio Math.}, 139\penalty0 (2):\penalty0 119--167, 2003.
\newblock ISSN 0010-437X.
\newblock \doi{10.1023/B:COMP.0000005077.42732.93}.

\bibitem[Hast(2021)]{Hast-transcendence}
D.~R. Hast.
\newblock Functional transcendence for the unipotent albanese map.
\newblock \emph{ArXiv e-prints}, 2021.
\newblock arXiv:1911.00587v3 [math.NT]. To appear in \emph{Algebra \& Number
  Theory}.

\bibitem[Jannsen(1989)]{Jannsen89}
U.~Jannsen.
\newblock On the {$l$}-adic cohomology of varieties over number fields and its
  {G}alois cohomology.
\newblock In \emph{Galois groups over {${\bf Q}$} ({B}erkeley, {CA}, 1987)},
  volume~16 of \emph{Math. Sci. Res. Inst. Publ.}, pages 315--360. Springer,
  New York, 1989.
\newblock \doi{10.1007/978-1-4613-9649-9_5}.

\bibitem[Katz and Zureick-Brown(2013)]{KZB13}
E.~Katz and D.~Zureick-Brown.
\newblock The {C}habauty--{C}oleman bound at a prime of bad reduction and
  {C}lifford bounds for geometric rank functions.
\newblock \emph{Compos. Math.}, 149\penalty0 (11):\penalty0 1818--1838, 2013.
\newblock ISSN 0010-437X.
\newblock \doi{10.1112/S0010437X13007410}.

\bibitem[Katz et~al.(2016)Katz, Rabinoff, and Zureick-Brown]{KRZB}
E.~Katz, J.~Rabinoff, and D.~Zureick-Brown.
\newblock Uniform bounds for the number of rational points on curves of small
  {M}ordell-{W}eil rank.
\newblock \emph{Duke Math. J.}, 165\penalty0 (16):\penalty0 3189--3240, 2016.
\newblock ISSN 0012-7094.
\newblock \doi{10.1215/00127094-3673558}.

\bibitem[Kim(2005)]{Kim-siegel}
M.~Kim.
\newblock The motivic fundamental group of the projective line minus three
  points and the theorem of {S}iegel.
\newblock \emph{Invent. Math.}, 161\penalty0 (3):\penalty0 629--656, 2005.
\newblock ISSN 0020-9910.
\newblock \doi{10.1007/s00222-004-0433-9}.

\bibitem[Kim(2009)]{Kim-alb}
M.~Kim.
\newblock The unipotent {A}lbanese map and {S}elmer varieties for curves.
\newblock \emph{Publ. Res. Inst. Math. Sci.}, 45\penalty0 (1):\penalty0
  89--133, 2009.
\newblock ISSN 0034-5318.
\newblock \doi{10.2977/prims/1234361156}.

\bibitem[Kim(2012)]{Kim-galois}
M.~Kim.
\newblock Galois theory and {D}iophantine geometry.
\newblock In \emph{Non-Abelian Fundamental Groups and {I}wasawa Theory}, volume
  393 of \emph{London Math. Soc. Lecture Note Ser.}, pages 162--187. Cambridge
  Univ. Press, Cambridge, 2012.

\bibitem[McCallum and Poonen(2012)]{MP12}
W.~McCallum and B.~Poonen.
\newblock The method of {C}habauty and {C}oleman.
\newblock In \emph{Explicit Methods in Number Theory}, volume~36 of
  \emph{Panor. Synth\`eses}, pages 99--117. Soc. Math. France, Paris, 2012.

\bibitem[Milne(2006)]{Milne-ADT}
J.~S. Milne.
\newblock \emph{Arithmetic Duality Theorems}.
\newblock BookSurge, LLC, Charleston, SC, second edition, 2006.
\newblock ISBN 1-4196-4274-X.

\bibitem[Milne(2017)]{Milne-AG}
J.~S. Milne.
\newblock \emph{Algebraic Groups}, volume 170 of \emph{Cambridge Studies in
  Advanced Mathematics}.
\newblock Cambridge University Press, Cambridge, 2017.
\newblock ISBN 978-1-107-16748-3.
\newblock \doi{10.1017/9781316711736}.

\bibitem[Neukirch et~al.(2008)Neukirch, Schmidt, and Wingberg]{NSW08}
J.~Neukirch, A.~Schmidt, and K.~Wingberg.
\newblock \emph{Cohomology of number fields}, volume 323 of \emph{Grundlehren
  der Mathematischen Wissenschaften [Fundamental Principles of Mathematical
  Sciences]}.
\newblock Springer-Verlag, Berlin, second edition, 2008.
\newblock ISBN 978-3-540-37888-4.
\newblock \doi{10.1007/978-3-540-37889-1}.

\bibitem[Poonen(2005)]{P}
B.~Poonen.
\newblock Unramified covers of {G}alois covers of low genus curves.
\newblock \emph{Math. Res. Lett.}, 12\penalty0 (4):\penalty0 475--481, 2005.
\newblock ISSN 1073-2780.
\newblock \doi{10.4310/MRL.2005.v12.n4.a3}.

\bibitem[Serre(1989)]{Serre-MW}
J.-P. Serre.
\newblock \emph{Lectures on the {M}ordell-{W}eil theorem}, volume~15 of
  \emph{Aspects of Mathematics}.
\newblock Friedr. Vieweg \& Sohn, Braunschweig, 1989.
\newblock ISBN 3-528-08968-7.
\newblock \doi{10.1007/978-3-663-14060-3}.
\newblock Translated from the French and edited by Martin Brown from notes by
  Michel Waldschmidt.

\bibitem[Stoll(2006)]{Stoll06}
M.~Stoll.
\newblock Independence of rational points on twists of a given curve.
\newblock \emph{Compos. Math.}, 142\penalty0 (5):\penalty0 1201--1214, 2006.
\newblock ISSN 0010-437X.
\newblock \doi{10.1112/S0010437X06002168}.

\bibitem[Stoll(2019)]{Stoll}
M.~Stoll.
\newblock Uniform bounds for the number of rational points on hyperelliptic
  curves of small {M}ordell-{W}eil rank.
\newblock \emph{J. Eur. Math. Soc. (JEMS)}, 21\penalty0 (3):\penalty0 923--956,
  2019.
\newblock ISSN 1435-9855.
\newblock \doi{10.4171/JEMS/857}.

\bibitem[Voloch(2012)]{Voloch12}
J.~F. Voloch.
\newblock Finite descent obstruction for curves over function fields.
\newblock \emph{Bull. Braz. Math. Soc. (N.S.)}, 43\penalty0 (1):\penalty0 1--6,
  2012.
\newblock ISSN 1678-7544.
\newblock \doi{10.1007/s00574-012-0001-7}.

\end{thebibliography}

\end{document}